\newtheorem{thm}{Theorem}[section]
\theoremstyle{definition}
\newtheorem{rem}[thm]{Remark}
\theoremstyle{remark}
\numberwithin{equation}{section}
\newcommand{\PP}{\mathcal{P}}
\newcommand{\C}{\mathcal{C}}
\newcommand{\W}{\mathcal{W}}
\newcommand{\V}{\mathcal{V}}
\newcommand{\IR}{\mathbb{R}}
\newcommand{\IP}{\mathbb{P}}
\newcommand{\IN}{\mathbb N}
\newcommand{\R}{\mathcal{R}}
\newcommand{\qq}{\mathrm{q}}
\newcommand{\rr}{\mathrm{r}}
\newcommand{\dd}{\mathrm{d}}
\newcommand{\vp}{\varphi}
\newcommand{\vertiii}[1]{{\left\vert\kern-0.25ex\left\vert\kern-0.25ex\left\vert #1 
    \right\vert\kern-0.25ex\right\vert\kern-0.25ex\right\vert}}
\DeclareMathOperator{\dom}{dom}
\begin{document}
\nocite{*}
\title[DG for the wave equation: a priori error analysis]
{Discontinuous Galerkin for the wave equation: 
a simplified a priori error analysis}
\author[N.~Rezaei]{Neda Rezaei}

\address{Department of Mathematics, 
University of Kurdistan, P. O. Box 416, 
Sanandaj, Iran}

\email{neda.rezaei93@yahoo.com;  n.rezaei@sci.uok.ac.ir}

\author[F.~Saedpanah]{Fardin Saedpanah}

\address{Department of Mathematics, 
University of Kurdistan, P. O. Box 416, 
Sanandaj, Iran, and 
Department of Engineering, University of Borås, 
SE-501 90 Borås, Sweden} 

\email{f.saedpanah@uok.ac.ir;  
fardin.saedpanah@hb.se}

\thanks{{\scriptsize
\hskip -0.4 true cm \emph{MSC(2010)}:  65M60, 35L05.
\newline \emph{Keywords}: Second order hyperbolic problems, 
wave equation, discontinuous Galerkin method, stability estimate,
a priori error estimate.}}

\begin{abstract}
Standard discontinuous Galerkin methods, based on piecewise 
polynomials of degree $ \qq=0,1$, are considered for 
temporal semi-discretization for second order hyperbolic 
equations. 
The main goal of this paper is to present a simple 
and straightforward a priori error analysis of optimal order 
with minimal regularity requirement on the solution. 
Uniform norm in time error estimates are also proved. 
To this end, energy identities and stability estimates of 
the discrete problem are proved for a slightly more general 
problem. 
These are used to prove optimal order a priori error estimates 
with minimal regularity requirement on the solution. 
The combination with the classic continuous Galerkin finite 
element discretization in space variable is used, 
to formulate a full-discrete scheme. 
The a priori error analysis is presented. 
Numerical experiments are performed to verify the theoretical 
results. 
\end{abstract}

\date{\today}
\maketitle
\section{Introduction}
We study a priori error analysis of the discontinuous Galerkin 
methods of order $\qq=0,1$, 
dG($\qq$), for temporal semi-discretization of the second order 
hyperbolic problems 
\begin{equation}\label{Model} 
  \ddot{u}+A u =f,  \quad t\in (0,T), \quad 
  \textrm{ with }  
   u(0)=u_{0},\ \dot{u}(0)=v_{0},
\end{equation}
where $A$ is a self-adjoint, positive definite, uniformly elliptic 
second-order operator on a Hilbert space $H$. 
We then combine the dG($\qq$) method with a standard continuous 
Galerkin of order $\rr\geq 1$, cG($\rr$), for spatial discretization  
to formulate a full discrete scheme, to be called dG($\qq$)-cG($\rr$). 

We may consider, as a prototype equation for such second order 
hyperbolic equations, $A=-\Delta$ with homogeneous 
Dirichlet boundary conditions. 
That is, the classical wave equation, 
\begin{equation}\label{Wave-eq} 
\begin{array}{lll} 
  \ddot{u}(x,t)-\Delta u(x,t)  =f(x,t) &  
   \textrm{in} \quad \Omega \times  \left( 0,T\right),\\
   u(x,t)=0 &   \textrm{on} \quad \Gamma \times \left( 0,T\right),\\
   u(x,0)=u_{0}(x),\quad \dot{u}(x,0)=v_{0}(x)&  
    \textrm{in} \quad\Omega,
\end{array}
\end{equation}
where  $\Omega $ is a bounded and convex polygonal domain in 
$\IR^{d}$, $d\in\lbrace1,2,3\rbrace$, with boundary $\Gamma$. 
We denote $\dot{u}=\frac{\partial u}{\partial t}$ and 
$ \ddot{u}=\frac{\partial^{2}u}{\partial t^{2}}$. 
The present work applies also to wave phenomena with vector valued 
solution $u:\Omega \times (0,T) \to \IR^d$, such as wave elasticity. 

We may also consider more general equations 
\begin{equation}\label{Model-tilde} 
  \ddot{u}+ \tilde A u =f,  \quad t\in (0,T), \quad 
  \textrm{ with }  
   u(0)=u_{0},\ \dot{u}(0)=v_{0},
\end{equation}
where $\tilde{A}=-\nabla \cdot \kappa\nabla$.  
That is,  
\begin{equation}\label{Wave-eq-tildeA} 
\begin{array}{lll} 
  \ddot{u}(x,t)-\nabla \cdot \kappa\nabla u(x,t)  =f(x,t) &  
   \textrm{in} \quad \Omega \times  \left( 0,T\right),\\
   u(x,t)=0 &   \textrm{on} \quad \Gamma \times \left( 0,T\right),\\
   u(x,0)=u_{0}(x),\quad \dot{u}(x,0)=v_{0}(x)&  
    \textrm{in} \quad\Omega. 
\end{array}
\end{equation}
Here $ \kappa(x) $ is a smooth function and for two positive constants $\kappa_{\min} $ and
 $\kappa_{\max}$,
 \begin{equation*}
    \kappa_{\min}\leq \kappa(x) \leq\kappa_{\max}, \qquad x\in\Omega.
 \end{equation*}
We note that $\kappa(x)$ can also be a uniformly symmetric positive 
definite matrix. 

Throughout this paper, for simplicity, we consider 
\eqref{Wave-eq}, and we remark how the approach is applied 
to \eqref{Wave-eq-tildeA}, too. 
The results and the corresponding proofs for $\tilde A$ are very similar to the case $A$, and therefore we will omit the proofs. 

The discontinuous Galerkin type methods for time or space 
discretization have been studied extensively in the literature 
for ordinary differential equations and parabolic/hyperbolic 
partial differential equations; see, for example, 
\cite{AdjeridTemimi2011, 
BanjaiGeorgoulisLijoka, Castillo, Celiker, Delfour, 
Eriksson2, Eriksson1, Johnson, LarssonRachevaSaedpanah, 
LarssonThomeeWahlbin, Lasaint, 
Kassem, Reed, SchmutzWihler, Thomee} and the references therein. 
In particular, several discontinuous and continuous Galerkin 
finite element methods, both in time and space 
variables, for solving second order hyperbolic equations have 
appeared in the literature, see ,e.g. 
\cite{AdjeridTemimi2011, French, Grote, Johnson1, Riviere} 
and the references therein. 

A dG(1)-cG(1) methods was studied in \cite{Johnson1}. 
This was extended by \cite{AdjeridTemimi2011}, where 
dG time-stepping methods was applied directly to the 
second-order ode system, that arise 
from spatial semi-discretization by standard cG methods. 
Discontinuous spatial discretization of  
wave problems were studied in 
\cite{Grote, MullerSchotzauSchwab2018, Riviere}. 

Uniform in time stability analysis, also so-called 
\textit{strong stability} or 
\textit{$L_\infty$-stability}, 
has been studied for parabolic problems,  
\cite{ErikssonJohnsonLarsson, LarssonThomeeWahlbin, 
SchmutzWihler}, but not for second order hyperbolic problems. 
An important tool for such analysis for parabolic problems is 
the smoothing property of the solution operator, 
thanks to analytic semigroup. 
For parabolic problems, in \cite{ErikssonJohnsonLarsson}, 
uniform in time stability 
and error estimates for dG($\qq$), $\qq\geq 0$, have been proved using 
Dunford-Taylor formula based on smoothing properties of the 
analytic semigroups. For parabolic problems which is perturbed 
by a  memory term, such analysis has been done for dG(0) and 
dG(1), using the linearity of the basis functions in time,  
\cite{LarssonThomeeWahlbin}. 
Another way to analyze uniform in time stability is 
using a lifting operator technique to write the dG($\qq$) 
formulation in a strong (pointwise) form, \cite{SchmutzWihler}. 

Second order hyperbolic problems unfortunately do not enjoy such 
smoothing properties, due to the fact that the solution operator 
generates a $C_0$-semigroup only, but not analytic semigroup. 
However, one can use linearity of the basis function in time 
in case of dG(0) and dG(1) to prove such a priori error estimates, 
that is a part of this work. 

Optimal order $L_\infty([0,\infty),L_2(\Omega))$ estimates for 
Galerkin finite element approximation of 
the wave equation were first obtained by \cite{Dupont}, and 
the regularity requirement for the initial displacement was 
not minimal. 
This was improved in \cite{Baker}, and in \cite{Rauch} 
it was shown that the resulting regularity requirement 
is optimal, see \cite[Lemma 4.4]{KovacsLarssonSaedpanah2010} 
for more details. 
A new approach was introduced for a priori error analysis of 
the second order hyperbolic problems in the context of 
continuous Galerkin methods, 
spatial semi-discretization cG(1) in 
\cite{KovacsLarssonSaedpanah2010} and cG(1)-cG(1) in 
\cite{LarssonSaedpanah2010}. 

Here, we extend such a priori error analysis to dG($\qq$) 
time-stepping for $\qq=0,1$, 
for \eqref{Wave-eq}, as the chief example for \eqref{Model}. 
We also present the a priori error analysis for a full discrete 
scheme by combining dG($\qq$) with a standard cG($\rr$), $\rr\geq 1$, 
method for spatial discretization (see also Remark \ref{Remark_1}). 
The regularity requirements on the solution is minimal, that 
is important, in particular, for stochastic model problems and 
for second order hyperbolic partial differential equations 
perturbed by a memory term, see  
\cite{KovacsLarssonSaedpanah2010, LarssonSaedpanah2010, 
Saedpanah2015}. 
The approach presented here is simple and straightforward 
such that we can prove error estimates in several space-time 
norms. We show also how the same approach is used to prove 
uniform in time error estimates. 
We note that the error analysis in 
\cite{LarssonSaedpanah2010} is based on energy arguments, while 
in \cite{Saedpanah2015} it is via duality arguments. 
That is, we can use the presented approach of error analysis 
of dG methods via duality arguments, too.

To prove a priori error estimates 
\textit{at the time-mesh points} and also 
\textit{uniform in time}, we prove stability estimates and 
energy identity, respectively, 
for the discrete problem of a more general form, by considering 
an extra (artificial) load term in the so called 
displacement-velocity formulation (see Remark \ref{ExtraLoad}). 
This gives the flexibility to obtain optimal order a priori 
error estimates with minimal regularity requirement on the 
solution. See Remark \ref{Remark_4}, too. 
For dG methods long-time integration without error accumulation 
is possible, since the stability constants are independent of 
the length of the time interval, see also Remark 
\ref{Remark_dG_cG}.

The outline of this paper is as follows.
We provide some preliminaries and the weak formulation of the 
model problem, in $\S 2$. 
In section 3, we formulate the dG($\qq$) method,  
and we obtain energy identity and stability estimates   
for the discrete problem of a slightly more general form. 
Then, in $\S 4$, we prove optimal order a priori error estimates 
in $L_2$ and $H^1$ norms for the displacement and $L_2$-norm 
of the velocity, with minimal regularity requirement on the 
solution. 
We also prove uniform in time a priori error estimates.  
In $\S$ \ref{FullDiscrete}, we formulate 
the dG($\qq$)-cG($\rr$) scheme and study the stability of the discrete 
problem, to be used to prove a priori error estimates in section 
\ref{A priori_Full}.  
Finally, numerical experiments are presented in section 
\ref{Example} in order to illustrate the theory.

\section{Preliminaries}
We let $ H=L_{2}( \Omega)$ with the inner product 
$(\cdot,\cdot)$ and the induced norm $\| \cdot \|$. 
Denote 
$\V=H_0^1(\Omega)=\{ u\in H^1(\Omega): u|_{\Gamma}=0 \}$ 
with the energy inner product 
$a(\cdot, \cdot)=(\nabla\cdot,\nabla\cdot)$ and  the induced norm 
$\| \cdot \|_{\V}$.
Let $A=-\Delta$ be defined with homogeneous Dirichlet boundary conditions on $ \dom\left( A\right)=H^{2}(\Omega)\cap\V$, 
and 
$\{(\lambda_k,\vp_k)\}_{k=1}^\infty$ be the eigenpairs of $A$, i.e.,
\begin{equation*} 
 A\vp_k=\lambda_k \vp_k,\quad k \in \IN.
\end{equation*}
It is known that
$ 0<\lambda_1\leq \lambda_2\leq \dots \leq \lambda_k \leq \cdots$
with $\lim_{k\to\infty}\lambda_k=\infty$ and the eigenvectors
$\{\vp_k\}_{k=1}^\infty$ form an orthonormal basis for $H$.  
Then 
\begin{equation*}
  (A^{l}u,v) =\sum_{k=1}^{\infty} \lambda^{l}_{k}(u,\varphi_{k})(v,\varphi_{k}), 
\end{equation*}
and we introduce the fractional order spaces, \cite{Thomee}, 
\begin{equation*}
 \dot H^\alpha
 =\dom(A^{\frac{\alpha}{2}}), \quad
 \|v\|_\alpha^2 :=\|A^{\frac{\alpha}{2}} v\|^2
 =\sum_{k=1}^\infty \lambda_k^\alpha (v,\vp_k)^2,\quad 
 \alpha\in \IR,\ v \in \dot H^\alpha.
\end{equation*}
We note that $H=\dot H^0$ and $\V = \dot H^1$.
Defining the new variables $  u_{1}=u $ and $ u_{2}=\dot{u} $,
we can write the velocity-displacement form of  \eqref{Wave-eq} as
\begin{equation*} 
\begin{array}{lll} 
   - \Delta\dot{u}_{1} +\Delta u_{2}=0
     &\textrm{in} \quad \Omega \times  \left( 0,T\right),\\
     \dot{u}_{2}-\Delta u_{1}=f    
     &\textrm{in} \quad  \Omega \times \left( 0,T\right),\\
     u_{1}=u_{2}=0  
     &\textrm{on}  \quad \Gamma \times \left( 0,T\right),\\
     u_{1}(\cdot,0)=u_{0}, \ u_{2}(\cdot,0)=v_{0}
     &\textrm{in}  \quad\Omega,
\end{array} 
\end{equation*}
for which, the weak form is to find
$ u_{1}(t) $ and $ u_{2}(t) \in \V $ such that  
\begin{equation}\label{weak form}
\begin{array}{l} 
   a(\dot{u}_{1}(t),v_{1})-a(u_{2}(t),v_{1})=0,\\
   (\dot{u}_{2}(t),v_{2})+ a(u_{1}(t),v_{2})=( f(t),v_{2}),
   \qquad  \forall v_{1}, v_{2} \in \V,\quad t\in(0,T),\\
   u_{1}(0)=u_{0}, \ u_{2}(0)=v_{0}. 
\end{array}
\end{equation}
This equation is used for dG($\qq$) formulation.

\begin{rem} \label{A tilde: section 2}
For $\tilde{A}=-\nabla \cdot \kappa\nabla$ 
in \eqref{Wave-eq-tildeA}, 
with homogeneous Dirichlet boundary conditions on 
$ \dom( \tilde{A})=H^{2}(\Omega)\cap\V$, 
we denote by $\{(\tilde{\lambda}_k,\tilde \vp_k)\}_{k=1}^\infty$ 
 the eigenpairs of $\tilde{A}$, i.e.,
\begin{equation*} 
 \tilde{A}\tilde \vp_k=\tilde{\lambda}_k \tilde\vp_k,\quad k \in \IN.
\end{equation*}
Then 
$ 0<\tilde\lambda_1\leq \tilde\lambda_2\leq \dots \leq \tilde\lambda_k \leq \cdots$
with $\lim_{k\to\infty}\tilde\lambda_k=\infty$ and the eigenvectors
$\{\tilde\vp_k\}_{k=1}^\infty$ form an orthonormal basis for $H$. 
Having 
\begin{equation*}
  (\tilde A^{l}u,v) 
  =\sum_{k=1}^{\infty} \tilde\lambda^{l}_{k}
    (u,\tilde\varphi_{k})(v,\tilde\varphi_{k}), 
\end{equation*}
we can introduce the fractional order spaces 
\begin{equation*}
 \dot{ \tilde H}^\alpha
 =\dom(\tilde A^{\frac{\alpha}{2}}), \quad
 \vertiii{v}_\alpha^2 :=\|\tilde A^{\frac{\alpha}{2}} v\|^2
 =\sum_{k=1}^\infty \tilde \lambda_k^\alpha (v,\tilde \vp_k)^2,\quad 
 \alpha\in \IR,\ v \in \dot{ \tilde H}^\alpha.
\end{equation*}
We note that $H=\dot{\tilde{H}}^0$ and $\V=\dot{\tilde{H}}^1$. 
If we define an energy inner product 
$ \tilde{a}(\cdot,\cdot)=(\kappa \nabla\cdot,\nabla\cdot) $ 
with the induced norm $\vertiii{ \cdot}_{\V}$, then the norms 
$ \vertiii{ \cdot}_{\V} $ and $ \| \cdot \|_{\V} $  
are equivalent on $ \V $, that is 
\begin{equation*} 
 \kappa_{\min}\| v \|_{\V}
 \leq\vertiii{ v}_{\V}
 \leq \kappa_{\max}\| v \|_{\V}, \qquad v \in  \V .
\end{equation*}
We also note that 
the norms $\|\cdot \|_\alpha$ and $\vertiii{\cdot}_\alpha$ 
are equivalent on $\dot H^\alpha$. 

Then, the weak form of \eqref{Wave-eq-tildeA} is to find 
$ u_{1}(t) $ and $ u_{2}(t) \in \V $ such that  
\begin{equation}\label{weak form-tilde}
\begin{array}{l} 
   \tilde a(\dot{u}_{1}(t),v_{1})-\tilde a(u_{2}(t),v_{1})=0,\\
   (\dot{u}_{2}(t),v_{2})+ \tilde a(u_{1}(t),v_{2})=( f(t),v_{2}),
   \qquad  \forall v_{1}, v_{2} \in \V,\quad t\in(0,T),\\
   u_{1}(0)=u_{0}, \ u_{2}(0)=v_{0}. 
\end{array}
\end{equation}
This equation then can be used for the dG($\qq$) formulation. 

\end{rem}

\section{The discontinuous Galerkin time discretization} \label{dG}
In this section, we apply the standard dG method in time variable using 
piecewise polynomials of degree $\qq=0,1$, 
and we investigate the stability. 

\subsection{dG($ \qq $) formulation}
Let $ 0=t_{0}<t_{1} <\cdots <t_{N} =T$ be a temporal mesh 
with time subintervals $I_{n}=(t_{n-1},t_{n})$ 
and steps $ k_{n}=t_{n}-t_{n-1} $, 
and the maximum step-size by $  k=\max_{1\leq n\leq N}k_{n}$. 
Let $\IP_{\qq}=\IP_{\qq}(\V)
 =\{  v: v(t)=\sum_{j=0}^{\qq}v_{j}t^{j}, v_{j}\in \V\} $
and define the finite element space 
$ \V_{\qq} =\{ v: v|_{S_n}\in \IP_{\qq}( \V ),\  n=1, \dots, N\} $
for each space-time 'Slab' $ S^{n}=\Omega \times I_{n} $.
 
We follow the usual convention that a function $ U=(U_{1},U_{2})\in \V_{\qq}\times  \V_{\qq}$ 
is left-continuous at each time level $ t_{n} $ and we define 
$  U_{i,n}^{\pm}=\lim_{s\rightarrow 0^{\pm}}U_{i}(t_{n}+s)$, writing
\begin{equation*}
  U_{i,n}^{-}=U_{i}(t_{n}^{-}),
  \quad U_{i,n}^{+}=U_{i}(t_{n}^{+}),
  \quad\left[  U_{i}\right]_{n}=U_{i,n}^{+}-U_{i,n}^{-}
  \quad \textrm{for}\hskip .2cm i=1,2.
\end{equation*}

The dG method determines 
$ U=(U_{1},U_{2})\in \V_{\qq}\times  \V_{\qq} $ on 
$ S^{n} \times S^{n} $
for $ n=1,\dots,N $ by setting 
$ U_{0}^{-}=(U_{1,0}^{-},U_{2,0}^{-})$, and then
\begin{equation}\label{dG-I_n}
\begin{split}
 &\int_{I_{n}}\Big( a(\dot{U}_{1},V_{1})
   -a(U_{2},V_{1})\Big) \dd t
   +a(U_{1,n-1}^{+},V_{1,n-1}^{+})
  = a(U_{1,n-1}^{-},V_{1,n-1}^{+}),\\
 &\int_{I_{n}}\Big((\dot{U}_{2},V_{2})
   +a(U_{1},V_{2})\Big) \dd t 
   +( U_{2,n-1}^{+},V_{2,n-1}^{+})\\
 &\hskip 1.5cm=( U_{2,n-1}^{-},V_{2,n-1}^{+})
   +\int_{I_{n}}(f,V_{2}) \dd t, 
    \quad\forall 
    V=(V_{1},V_{2})\in \IP_{\qq}\times \IP_{\qq} .
\end{split}
\end{equation}

Now, we define the function space $\W$ consists of functions 
which are piecewise smooth with respect to the temporal mesh with values
in $ \dom(A)$. 
We note that $\V_{\qq}\subset \W $.
Then we define the bilinear form 
$B$ 
and the linear form $L$ on $\W \times \W$  
by 
\begin{equation}\label{B(u,v)=L(v)}
\begin{split}
  & \hskip -1cm B( (u_{1},u_{2}),(v_{1},v_{2}))
  =\sum_{n=1}^{N}\int_{I_{n}}\Big\{ a(\dot{u}_{1},v_{1}) 
   -a(u_{2},v_{1})
   +(\dot{u}_{2},v_{2})
   +a(u_{1},v_{2}) \Big\}\dd t \\
  & \hskip 4.2cm + \sum_{n=1}^{N-1}
    \left\lbrace  a(\left[ u_{1}\right]_{n},v_{1,n}^{+} )
    +(\left[ u_{2}\right]_{n},  v_{2,n}^{+})\right\rbrace\\
  &\hskip 4.2cm+a(u_{1,0}^{+},v^{+}_{1,0})
   +( u^{+}_{2,0},v^{+}_{2,0}),\\
  &\hskip 0.2cm L\big((v_{1},v_{2})\big)
 =\sum_{n=1}^{N}\int_{I_{n}}( f,v_{2}) \dd t 
   +a(u_{0},v^{+}_{1,0})+( v_{0},v_{2,0}^{+}).
\end{split}
\end{equation}
Then $ U=(U_{1},U_{2})\in \V_{\qq}\times  \V_{\qq}$, 
the solution of discrete problem \eqref{dG-I_n}, satisfies 
\begin{equation}\label{B(U,V)}
\begin{split}
 &B(U,V)=L(V), \qquad\forall V=(V_{1},V_{2})\in  \V_{\qq}\times  \V_{\qq},\\
 &U^{-}_{0}=(U^{-}_{1,0},U^{-}_{2,0})=(u_{0},v_{0}).
\end{split}
\end{equation}
We note that the solution $u= (u_{1},u_{2})$ of \eqref{weak form} also satisfies 
\begin{equation}\label{B(u,v)}
\begin{split}
 &B (u,v)=L(v),  \qquad \forall  v=  (v_{1},v_{2})\in \W \times \W,\\
 &(u_{1}(0),u_{2}(0))=(u_{0},v_{0}).
\end{split}
\end{equation}
These imply the Galerkin orthogonality for the error
$ e=(e_{1},e_{2}) =(U_{1},U_{2})-(u_{1},u_{2})$, that is, 
\begin{equation} \label{GalerkinOrthogonality}
\hskip -.6cm B(e,V) =0, \quad  \forall  V=(V_{1},V_{2})\in   \V_{\qq}\times  \V_{\qq}.
\end{equation}
Integration by parts yields an alternative expression for the bilinear form \eqref{B(u,v)=L(v)}, as
\begin{equation}\label{B^*}
\begin{split}
 B^*(u,v)
 & =\sum_{n=1}^{N}\int_{I_{n}}\Big\{-a(u_{1},\dot{v}_{1})
   -a(u_{2},v_{1})-( u_{2},\dot{v}_{2})
   +a(u_{1},v_{2})\Big\} \dd t\\
 &\quad -\sum_{n=1}^{N-1}\Big\{a(u^{-}_{1,n},[v_{1}]_{n} )
   +( u^{-}_{2,n},[v_{2}]_{n})\Big\} \\
 &\quad +a(u^{-}_{1,N},v^{-}_{1,N})+(u^{-}_{2,N},v^{-}_{2,N}).
\end{split}
\end{equation}

\begin{rem} \label{Remark_1}
We note that the framework applies also to spatial finite dimensional 
function spaces $\V_{\qq,\rr}\subset \V_{\qq}$, such as, a continuous Galerkin 
finite element method of order $r$ for discretization in space variable. 
One can combine a continuous Galerkin finite element method 
in spatial variable to get a full discrete scheme. 
That is the subject of section \ref{FullDiscrete}. 
\end{rem}

\subsection{Stability}
Here, we present a stability (energy) identity and stability 
estimate, that are used in a priori error analysis. 
In our error analysis we need a stability identity for a 
slightly more general problem, that is $ U=(U_{1},U_{2})\in  \V_{\qq}\times  \V_{\qq} $ such that 
\begin{equation}\label{B= L^V}
 B(U,V)=\hat{L}(V),   \qquad  \forall V=(V_{1},V_{2})\in  \V_{\qq}\times  \V_{\qq},
\end{equation}
where the linear form 
$\hat{L}$ is defined on $\W\times\W$ by
\begin{align*}
 \hat{L}((v_{1},v_{2}))
 =\sum _{n=1}^{N} \int_{I_{n}}\Big\{ a(f_{1},v_{1})
  +( f_{2},v_{2}) \Big\} \dd t
  +a(u_{0},v^{+}_{1,0}) + ( v_{0},v_{2,0}^{+}).
\end{align*}
That is, instead of \eqref{weak form}, we study stability 
of the dG($\qq $) discretization of a more general problem
\begin{equation*} 
\begin{array}{l} 
   a(\dot{u}_{1}(t),v_{1})-a(u_{2}(t),v_{1})=a(f_1(t),v_{1}),\\
   (\dot{u}_{2}(t),v_{2})+ a(u_{1}(t),v_{2})=( f_2(t),v_{2}),
   \qquad  \forall v_{1}, v_{2} \in \V,\quad t\in(0,T),\\
   u_{1}(0)=u_{0}, \ u_{2}(0)=v_{0}. 
\end{array}
\end{equation*}
See Remark \ref{ExtraLoad}. 

We define the following norms
 \begin{align*}
 \| u\|_{I_{n}}=\sup_{t\in I_{n}} \| u(t)\|,  \quad \text{and}
\quad \| u\|_{s,I_{n}}=\sup_{t\in I_{n}} \| u(t)\|_{s},
\end{align*}
and
 \begin{align*}
 \| u\|_{J_{N}}=\sup_{t\in J_{N}} \| u(t)\|,  \quad \text{and}
\quad \| u\|_{s,J_{N}}=\sup_{t\in J_{N}} \| u(t)\|_{s},
\end{align*}
where $ J_{N}=(0,t_{N})$.



\begin{thm}\label{Theorem-stability1}
Let $ U=(U_{1},U_{2}) $ be a solution of \eqref{B= L^V}. 
Then for any $ T>0 $ and $ l\in\IR $, we have the energy identity
\begin{equation}\label{ stability identity} 
\begin{split}
 \| U_{1,N}^{-}\|_{l+1}^{2}
 +&\| U_{2,N}^{-}\|_{l}^{2}
  +\sum _{n=0}^{N-1}\left\lbrace \|[U_{1}]_{n}\|_{l+1}^{2}
  +\|[U_{2}]_{n}\|_{l}^{2} \right\rbrace  \\
 = &\| u_{0}\|_{l+1}^{2}
  +\| v_{0}\|_{l}^{2}+2\int_{0}^{T}
    \Big\{  a(f_{1},A^{l}U_{1})
     +(f_{2},A^{l}U_{2}) \Big\} \dd t .
\end{split}
\end{equation}
Moreover, for some constant $ C> 0$ (independent of $T$), 
we have the stability estimate 
\begin{align}\label{ stability estimate}
 \| U_{1,N}^{-}\|_{l+1}
  +\| U_{2,N}^{-}\|_{l}
 \leq C\Big(  \| u_{0}\|_{l+1}+\| v_{0}\|_{l}  
  +\int_{0}^{T} \left\lbrace \| f_{1}\|_{l+1}
  +\| f_{2}\| _{l}\right\rbrace  \dd t \Big).
\end{align}
\end{thm}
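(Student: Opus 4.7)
The plan is to derive the energy identity by testing the discrete equation $B(U,V) = \hat L(V)$ with the carefully chosen $V = (A^l U_1, A^l U_2)$. Since $A^l U_i$ is not literally in $\V_\qq$ for general $l \in \IR$, rigour is obtained by expanding $U_i = \sum_k U_{i,k}(t)\vp_k$ in the eigenbasis of $A$; then $A^l U_i = \sum_k \lambda_k^l U_{i,k}(t)\vp_k$ is a well-defined polynomial of degree $\leq \qq$ in $t$ in the appropriate spectrally-defined sense. Before testing, I would absorb the initial boundary terms of $B$ and $\hat L$ into a single $n=0$ jump by setting $U_{1,0}^- := u_0$ and $U_{2,0}^- := v_0$; the resulting identity $B(U,V) - \hat L(V) = 0$ then carries a clean jump sum running from $n = 0$ to $N-1$.

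Two simplifications drive the calculation. First, the cross terms cancel: $-a(U_2, A^l U_1) + a(U_1, A^l U_2) = 0$ by the symmetry of $a(\cdot,\cdot)$ and the self-adjointness of $A$ (commuting with $A^l$). Second, the time-derivative pieces become exact derivatives, so $\int_{I_n} a(\dot U_1, A^l U_1)\,\dd t = \tfrac12(\|U_{1,n}^-\|_{l+1}^2 - \|U_{1,n-1}^+\|_{l+1}^2)$, and analogously for $U_2$ with norm $\|\cdot\|_l$. The jump contributions are handled by the elementary Hilbert-space identity $(w^+ - w^-, w^+)_s = \tfrac12(\|w^+\|_s^2 - \|w^-\|_s^2 + \|w^+ - w^-\|_s^2)$ applied with $s = l+1$ and $s = l$. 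Summing over $n = 1, \ldots, N$, the internal $\|U_{i,n}^\pm\|$-contributions telescope completely, leaving precisely the LHS of \eqref{ stability identity} (after multiplying by $2$). On the RHS, $f_i$ may be replaced by $P_k f_i$ because $A^l U_i|_{I_n}$ is a polynomial of degree $\leq \qq$ in $t$, so the defining orthogonality of the $L_2$-projection absorbs the complementary part.

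For the stability estimate, I apply the energy identity on each interval $[0,t_m]$, drop the nonnegative jump sum on the LHS, and bound the RHS by Cauchy--Schwarz in the spaces $\dot H^{l+1}$ and $\dot H^l$: $|a(P_k f_1, A^l U_1)| \leq \|P_k f_1\|_{l+1}\|U_1\|_{l+1}$ and $|(P_k f_2, A^l U_2)| \leq \|P_k f_2\|_l\|U_2\|_l$. Setting $M = \max_{0 \leq n \leq N}(\|U_{1,n}^-\|_{l+1} + \|U_{2,n}^-\|_l)$ and using the $\qq$-dependent polynomial inverse estimate $\sup_{t \in I_n}\|U_i(t)\|_s \leq C_\qq(\|U_{i,n-1}^+\|_s + \|U_{i,n}^-\|_s)$, together with $\|U_{i,n-1}^+\|_s \leq \|U_{i,n-1}^-\|_s + \|[U_i]_{n-1}\|_s$ (the jumps being themselves controlled by the identity), I obtain a quadratic inequality of the form $M^2 \leq C(\|u_0\|_{l+1}^2 + \|v_0\|_l^2) + C M \int_0^T(\|f_1\|_{l+1} + \|f_2\|_l)\,\dd t$, from which elementary absorption yields \eqref{ stability estimate}. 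The main technical obstacle is precisely this final bootstrap: converting the quadratic identity into a \emph{linear} estimate in which the constant is independent of $T$, which is where the $T$-independence of the polynomial-in-time inverse estimate (purely $\qq$-dependent) and the simultaneous control of jumps by the LHS of the identity become essential.
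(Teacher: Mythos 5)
Your derivation of the energy identity \eqref{ stability identity} is correct and coincides with the paper's own argument: test with $V=A^{l}U$ (justified spectrally), cancel the cross terms by symmetry of $a(\cdot,\cdot)$, integrate the exact time derivatives, and absorb the jumps with the polarization identity $(w^{+}-w^{-},w^{+})_{s}=\tfrac12(\|w^{+}\|_{s}^{2}-\|w^{-}\|_{s}^{2}+\|w^{+}-w^{-}\|_{s}^{2})$; replacing $f_{i}$ by $P_{k}f_{i}$ through the orthogonality of the $L_{2}$-projection against $A^{l}U_{i}|_{I_{n}}\in\IP_{\qq}$ is also exactly what the paper does.

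The stability estimate is where your argument has a genuine gap. The paper disposes of \eqref{ stability estimate} with ``Cauchy--Schwarz, in a classical way''; you try to supply the missing kickback, which is the right instinct, but the lemma you lean on, $\sup_{t\in I_{n}}\|U_{i}(t)\|_{s}\le C_{\qq}\big(\|U_{i,n-1}^{+}\|_{s}+\|U_{i,n}^{-}\|_{s}\big)$ for $U_{i}|_{I_{n}}\in\IP_{\qq}$, is false for $\qq\ge 2$. Take $p(t)=M\,(t-t_{n-1})(t_{n}-t)k_{n}^{-2}\,v$ with $v\in\dot H^{s}$ fixed: $p\in\IP_{2}(I_{n})$, both one-sided endpoint values vanish, yet $\sup_{I_{n}}\|p\|_{s}=M\|v\|_{s}/4$ is arbitrarily large. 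The norm of a polynomial of degree at most one is convex in $t$ and hence maximized at an endpoint, which is why your estimate does hold for $\qq\in\{0,1\}$ --- and is precisely why the paper itself restricts its uniform-in-time results (Theorem \ref{Theorem3}) to those two cases. For $\qq\ge 2$ the interior values of $U_{1},U_{2}$ appearing in $\int_{0}^{T}\{a(P_{k}f_{1},A^{l}U_{1})+(P_{k}f_{2},A^{l}U_{2})\}\,\dd t$ are simply not controlled by the nodal values and jumps that the left-hand side of the energy identity bounds; closing the argument would require a separate local stability bound for the single-interval dG solution operator (uniform in the parameter $k_{n}\lambda_{k}^{1/2}$), or a discrete Duhamel decomposition using the nonexpansiveness of the homogeneous discrete propagator, neither of which is ``classical Cauchy--Schwarz''. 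So your proof establishes \eqref{ stability estimate} only for $\qq\in\{0,1\}$. Be aware that the paper's own one-line justification is equally silent on how the interior values are handled for $\qq\ge 2$, so you have located a real weakness in the argument rather than merely failed to reproduce it; but as written your bootstrap does not prove the theorem in the stated generality.
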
  
\begin{proof}
We set $ V_i=A^{l}U_i$ for~ $ i=1,2 $ in \eqref{B= L^V} to obtain 
\begin{equation*}
\begin{split}
  \frac{1}{2}&\sum_{n=1}^{N}\int_{I_{n}}
 \frac{\partial}{\partial t}\|U_{1}\|_{l+1}^{2}\dd t 
  +\frac{1}{2}\sum_{n=1}^{N}\int_{I_{n}}
   \frac{\partial}{\partial t} \|U_{2}\|_{l}^{2}\dd t \\
 & \ \ +\sum_{n=1}^{N-1}\Big\{a([U_{1}]_{n},A^{l}U_{1,n}^{+})
   +([U_{2}]_{n},A^{l}U_{2,n}^{+})\Big\} 
 +a(U_{1,0}^{+},A^{l}U_{1,0}^{+})
    +(U_{2,0}^{+},A^{l}U_{2,0}^{+})\\
  &=\int_{0}^{T}\Big\{ a(f_{1},A^{l}U_{1})
   +(f_{2},A^{l}U_{2})\Big\} \dd t 
   +a(u_{0},A^{l}U^{+}_{1,0})+(v_{0},A^{l}U^{+}_{2,0}).
\end{split}
\end{equation*}
Now writing the first two terms at the left 
side as
\begin{equation*}
\begin{split}
\hskip 1.5cm \frac{1}{2}\sum_{n=1}^{N}&\int_{I_{n}}
  \frac{\partial} {\partial t}\| U_{1}\|_{l+1}^{2}\dd t 
  +\frac{1}{2}\sum_{n=1}^{N}\int_{I_{n}}
   \frac{\partial}{\partial t}\| U_{2}\|_{l}^{2}\dd t \\
  &=\sum_{n=1}^{N-1} \Big\{\frac{1}{2}\| U^{-}_{1,n}\|_{l+1}^{2}
  -\frac{1}{2}\| U^{+}_{1,n}\|_{l+1}^{2}\Big\}+\frac{1}{2}\| U^{-}_{1,N}\|_{l+1}^{2}
  -\frac{1}{2}\| U^{+}_{1,0}\|_{l+1}^{2} \\
  &\qquad+\sum_{n=1}^{N-1}\Big\{\frac{1}{2}\| U^{-}_{2,n}\|_{l}^{2}
   -\frac{1}{2}\| U^{+}_{2,n}\|_{l}^{2}\Big\}+\frac{1}{2}\| U^{-}_{2,N}\|_{l}^{2}
 -\frac{1}{2}\| U^{+}_{2,0}\|_{l}^{2},
\end{split}
\end{equation*}
we have
\begin{equation*}   
\begin{split}
  &\sum_{n=1}^{N-1}
  \Big\{\frac{1}{2}\| U^{-}_{1,n}\|_{l+1}^{2}
   -\frac{1}{2}\| U^{+}_{1,n}\|_{l+1}^{2}
   +a([U_{1}]_{n},A^{l}U_{1,n}^{+})\Big\}
   +\frac{1}{2}\| U^{-}_{1,N}\|_{l+1}^{2}
   +\frac{1}{2}\| U^{+}_{1,0}\|_{l+1}^{2}\\
  &\quad +\sum_{n=1}^{N-1}
   \Big\{\frac{1}{2}\|U^{-}_{2,n}\|_{l}^{2} 
   -\frac{1}{2}\| U^{+}_{2,n}\|_{l}^{2}
   +([U_{2}]_{n},A^{l}U_{2,n}^{+})\Big\}
   +\frac{1}{2}\| U^{-}_{2,N}\|_{l}^{2}
   +\frac{1}{2}\| U^{+}_{2,0}\|_{l}^{2}\\
  &=\sum_{n=1}^{N}\int_{I_{n}}
  \Big\{ a(f_{1},A^{l}U_{1})
   +(f_{2},A^{l}U_{2})\Big\} \dd t 
   +a(U^{-}_{1,0},A^{l}U^{+}_{1,0})
   +(U_{2,0}^{-},A^{l}U_{2,0}^{+}).
\end{split}
\end{equation*}
Then, using (for $n=1,\dots,N-1$)
\begin{equation*} 
\begin{split}
  \frac{1}{2}\| U^{-}_{1,n}\|_{l+1}^{2}
  -\frac{1}{2}\| U^{+}_{1,n}\|_{l+1}^{2}
  + a([U_{1}]_{n},A^{l}U_{1,n}^{+})
 &=\frac{1}{2}\| [U_{1}]_{n}\|^{2}_{l+1}, \\
  \frac{1}{2}\| U^{-}_{2,n}\|_{l}^{2}
  -\frac{1}{2}\| U^{+}_{2,n}\|_{l}^{2}
  + ([U_{2}]_{n},A^{l}U_{2,n}^{+})
 &=\frac{1}{2}\| [U_{2}]_{n}\|^{2}_{l},
\end{split}
\end{equation*}
we conclude 
\begin{equation*}
\begin{split}
  &\frac{1}{2}\sum_{n=1}^{N-1}\| [U_{1} ]_{n} \|_{l+1}^{2}
   +\frac{1}{2}\| U_{1,N}\|^{2}_{l+1}
   +\frac{1}{2}\| U^{+}_{1,0}\|^{2}_{l+1}
   -a(U_{1,0}^{-},A^{l}U_{1,0}^{+}) \\ 
 &\quad+\frac{1}{2}\sum_{n=1}^{N-1}\|[U_{2}]_{n}\|_{l}^{2}
  +\frac{1}{2}\| U_{2,N}\|^{2}_{l}
  +\frac{1}{2}\| U^{+}_{2,0}\|^{2}_{l}
  -(U_{2,0}^{-},A^{l}U_{2,0}^{+})\\
 \quad= &\int_{0}^{T}\Big\{ a(f_{1},A^{l}U_{1})
  +(f_{2},A^{l}U_{2})\Big\} \dd t .
\end{split}
\end{equation*}
Hence, having 
\begin{equation*}
\begin{split}
  \frac{1}{2}\| U^{+}_{1,0}\|_{l+1} ^{2}
  -a(A^{\frac{l}{2}}U_{1,0}^{-},A^{\frac{l}{2}}U_{1,0}^{+})
 &=\frac{1}{2}\|[U_{1}]_{0}\|_{l+1}^{2}
  -\frac{1}{2}\| U^{-}_{1,0}\|_{l+1}^{2},\\
 \frac{1}{2}\| U^{+}_{2,0}\|_{l} ^{2}
  -(A^{\frac{l}{2}}U_{2,0}^{-},A^{\frac{l}{2}}U_{2,0}^{+})
 &=\frac{1}{2}\|[U_{2}]_{0}\|_{l}^{2}
  -\frac{1}{2}\| U^{-}_{2,0}\|_{l}^{2},
\end{split}
\end{equation*}
we conclude the identity
\begin{equation*}
\begin{split}
 \frac{1}{2}\| U_{1,N}^-\|^{2}_{l+1}
 &+\frac{1}{2}\| U_{2,N}^-\|^{2}_{l}
  +\frac{1}{2}\sum_{n=0}^{N-1}\|[U_{1}]_{n}\|_{l+1}^{2}
  +\frac{1}{2}\sum_{n=0}^{N-1}\|[U_{2} ]_{n}\|_{l}^{2} \\
 &=\frac{1}{2}\| u_{0}\|_{l+1}^{2}
  +\frac{1}{2}\| v_{0}\|_{l}^{2}
  +\int_{0}^{T}\Big\{ a(f_{1},A^{l}U_{1})
  +(f_{2},A^{l}U_{2})\Big\} \dd t .
\end{split}
\end{equation*}

Finally, to prove the stability estimate \eqref{ stability estimate}, 
recalling that all terms on the left side of the stability 
identity \eqref{ stability identity} are non-negative, we have
\begin{equation*}
\begin{split}
  \| U_{1,N}^-\|^{2}_{l+1}+\| U_{2,N}^-\|^{2}_{l}
 \leq \| u_{0}\|_{l+1}^{2}+\| v_{0}\|_{l}^{2}     
 +2\sum_{n=1}^{N}\int_{I_{n}}
 \Big\{  a(f_{1},A^{l}U_{1})
  +(f_{2},A^{l}U_{2})\Big\} \dd t.
\end{split}
\end{equation*}
Using Cauchy-Schwarz inequality, we obtain 
\begin{equation*}
\begin{split}
 \| U_{1,N}^-\|^{2}_{l+1}+\| U_{2,N}^-\|^{2}_{l}
 &\leq  \| u_{0}\|_{l+1}^{2}+\| v_{0}\|_{l}^{2}\\
 &\quad +2\sum_{n=1}^{N}\int_{I_{n}}
   \Big\{  \| f_{1}\|_{l+1}\|U_{1}\|_{l+1}
  +\| f_{2}\|_{l}\|U_{2}\|_{l}\Big\} \dd t\\
 & \leq  \| u_{0}\|_{l+1}^{2}+\| v_{0}\|_{l}^{2}
 +2\Big(\|U_1 \|_{l+1,J_N} 
  \sum_{n=1}^{N}\int_{I_{n}} \| f_{1}\|_{l+1}\dd t\Big)\\
 &\quad+2\Big( \|U_2 \|_{l,J_N}
   \sum_{n=1}^{N}\int_{I_{n}} \| f_{2}\|_{l}\dd t\Big), 
\end{split}
\end{equation*}
that, having $2ab \leq \epsilon a^2+ \frac{1}{\epsilon}b^2$, implies
\begin{equation} \label{Stability-Thm 3.2}
\begin{split}
 \| U_{1,N}^-\|^{2}_{l+1}+\| U_{2,N}^-\|^{2}_{l}
 &\leq  \| u_{0}\|_{l+1}^{2}+\| v_{0}\|_{l}^{2}\\
 &\quad +\varepsilon_{1}\|U_1 \|_{l+1,J_N}^{2}
   +\dfrac{1}{\varepsilon_{1}} \Big(\sum_{n=1}^{N}\int_{I_{n}}
    \| f_{1}\|_{l+1}\dd t\Big)^{2}\\
 &\quad+\varepsilon_{2}\|U_2 \|_{l,J_N}^{2}
   +\dfrac{1}{\varepsilon_{2}}\Big( \sum_{n=1}^{N}\int_{I_{n}}
 \| f_{2}\|_{l}\dd t\Big)^{2}.
\end{split}
\end{equation}
Now, using the fact that for piecewise constant and piecewise 
linear functions, i.e., for 
$(U_1,U_2)\in \V_{\qq} \times \V_{\qq},\ \qq=0,1$, 
we have 
\begin{equation*}
  \|U_i\|_{s,J_N} = \max_{n} \|U_i\|_{s,I_n} 
  \leq \max_{n} \|U_{i,n}\|_{s}, \quad 
\text{and} 
\quad  \|U_i\|_{s,J_N}^2 \leq \max_{n} \|U_{i,n}\|_{s}^2, 
\end{equation*}
and that the inequality \eqref{Stability-Thm 3.2} holds for 
arbitrary $N$, we conclude in a standard way 
\begin{equation*}
\begin{split}
  \| U_{1,N}^-\|^{2}_{l+1}+\| U_{2,N}^-\|^{2}_{l}
 &\leq C\Big(\| u_{0}\|_{l+1}^{2}+\| v_{0}\|_{l}^{2} 
   +\big( \int_{0}^{T}\! \| f_{1}\|_{l+1}\dd t\big)^{2}
   +\big( \int_{0}^{T}\! \| f_{2}\|_{l}\dd t\big)^{2}\Big).  
\end{split}
\end{equation*}
This concludes the stability estimate \eqref{ stability estimate},  
and the proof is now complete. 

\end{proof}

\begin{rem} \label{A tilde: section 3}
The dG($\qq$) can be applied to 
\eqref{Wave-eq-tildeA}, using the weak form 
\eqref{weak form-tilde}. 
Then stability identity and estimates, 
similar to \eqref{ stability identity} and 
\eqref{ stability estimate}, are obtained with norms 
$\vertiii{\cdot}_s$, the energy inner product 
$\tilde a(\cdot,\cdot)$ and the operator $\tilde A$,  
instead of $\|\cdot\|_s,\ a(\cdot,\cdot)$ and $A$, respectively. 
\end{rem}

\section{A priori error estimates for temporal discretization} \label{A priori}
For a given function $ u\in \C ([0,T]; \V),$ 
we define the interpolation $ \Pi_{k}u\in \V_{\qq}$ by 
\begin{equation}\label{interpolation operator}
\begin{split}
 &\Pi_{k} u(t_{n}^-)=u(t_{n}^-),\quad \textrm{for}\quad n\geq 0,\\
 &\int_{I_{n}}\big( \Pi_{k} u(t)- u(t)  \big)\chi \dd t=0,
  \quad \textrm{for}\quad \chi\in\IP_{\qq-1}, 
  \quad n\geq 1, 
\end{split}
\end{equation}
where the latter condition is not used for $\qq=0$. 
By standard arguments we then have 
\begin{equation}\label{interpolation error}
\int_{I_{n}}\| \Pi_{k} u-u\|_{j}\dd t
 \leq Ck_{n}^{\qq+1}\int_{I_{n}}\| u^{(\qq+1)}\|_{j} \dd t,  
   \quad \textrm{for} \quad j=0,1,
\end{equation}
where $ u^{(\qq)}=\frac{\partial^{\qq}u}{\partial t^{\qq}} $, 
see \cite{Quarteroni}.

First we prove a priori error estimates for the dG($\qq$) 
approximation solution at the nodal points, for which it is 
enough to use the stability estimate 
\eqref{ stability estimate}. Then, for uniform in time a 
priori error estimates, we need to use all information 
about the energy in the system, that is we need to use the 
energy identity \eqref{ stability identity}. 
We note that our analysis is limited to $\qq=0,1$  
to use the linearity property of the basis function to be able 
to prove uniform in time error estimates, since the semigroup is not analytic. 


\subsection{Estimates at the nodes}
\begin{thm}\label{Theorem2}
Let $ (U_{1},U_{2}) $ and $ (u_{1},u_{2})$ be the solutions of 
$\eqref{B(U,V)}$ and $ \eqref{B(u,v)}$, respectively.
Then with $e=(e_{1},e_{2})= (U_{1},U_{2})-(u_{1},u_{2})$ and 
for some constant $ C> 0$ (independent of $T$), we have
\begin{align}\label{Error1}
\| e_{1,N}^-\|_{1}+ \| e_{2,N}^-\|
\leq C\sum_{n=1}^{N} k_{n}^{\qq+1}\int_{I_{n}}\big\{\| u_{2}^{(\qq+1)}\|_{1}
 +\| u_{1}^{(\qq+1)}\|_{2}\big\} \dd t ,
\end{align}
\begin{align}\label{Error2}
 \hskip -1.7cm\| e_{1,N}^-\| 
  \leq C\sum_{n=1}^{N} k_{n}^{\qq+1}\int_{I_{n}}\big\{\| u_{2}^{(\qq+1)}\|
   +\| u_{1}^{(\qq+1)}\|_{1}\big\} \dd t.
\end{align}
\end{thm}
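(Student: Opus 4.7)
The plan is to split the error as $e = \theta + \rho$ with $\theta = U - \Pi_{k} u \in \V_{\qq}$ and $\rho = \Pi_{k} u - u$, and then bound $\theta$ via the stability estimate \eqref{ stability estimate}. By the nodal exactness in \eqref{interpolation operator}, $\rho_{i,n}^- = 0$ for every $n = 0,1,\ldots,N$; in particular $\|e_{i,N}^-\|_{j} = \|\theta_{i,N}^-\|_j$, so it is enough to bound $\|\theta_{1,N}^-\|_1 + \|\theta_{2,N}^-\|$ and $\|\theta_{1,N}^-\|$ separately. Moreover $\theta_0^- = U_0^- - (\Pi_{k} u)_0^- = (u_0,v_0) - (u_0,v_0) = 0$.

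The central step is to show that $\theta$ solves a problem of the form \eqref{B= L^V}. From Galerkin orthogonality \eqref{GalerkinOrthogonality} we get $B(\theta, V) = -B(\rho, V)$ for all $V \in \V_{\qq}$. Expanding $B(\rho, V)$ via \eqref{B(u,v)=L(v)}, integrating $\int_{I_n} a(\dot\rho_1, V_1)\dd t$ and $\int_{I_n}(\dot\rho_2, V_2)\dd t$ by parts in time, and using $\rho_{i,n}^- = 0$, the boundary term at $t_n^-$ vanishes while the boundary term at $t_{n-1}^+$ telescopes exactly against the jump terms $a([\rho_1]_n, V_{1,n}^+) + ([\rho_2]_n, V_{2,n}^+)$ together with the initial contribution $a(\rho_{1,0}^+, V_{1,0}^+) + (\rho_{2,0}^+, V_{2,0}^+)$. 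The remaining terms $\int_{I_n} a(\rho_1, \dot V_1)\dd t$ and $\int_{I_n}(\rho_2, \dot V_2)\dd t$ then vanish by the $\IP_{\qq-1}$-orthogonality in \eqref{interpolation operator} (trivially when $\qq = 0$, since $\dot V_i \equiv 0$ on each $I_n$). What survives is
\[
B(\theta, V) = \sum_{n=1}^{N} \int_{I_n} \bigl\{a(\rho_2, V_1) - a(\rho_1, V_2)\bigr\}\dd t,
\]
which is precisely \eqref{B= L^V} with $f_1 = \rho_2$, $f_2 = -A\rho_1$ (noting $(A\rho_1, V_2) = a(\rho_1, V_2)$ for $V_2 \in \V$) and zero initial data.

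With this identification, \eqref{ stability estimate} applied to $\theta$ yields, for any $l \in \IR$,
\[
\|\theta_{1,N}^-\|_{l+1} + \|\theta_{2,N}^-\|_{l} \leq C \int_0^T \bigl(\|\rho_2\|_{l+1} + \|\rho_1\|_{l+2}\bigr)\dd t.
\]
Choosing $l = 0$ gives the bound required for \eqref{Error1}, and $l = -1$ gives the bound required for \eqref{Error2}, once the interpolation error estimate \eqref{interpolation error} (whose extension to $j = 2$ follows by the same argument, as $\Pi_k$ acts only in time) converts each temporal integral into the rate $C \sum_n k_n^{\qq+1} \int_{I_n} \|u_i^{(\qq+1)}\|_j \dd t$.

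The main obstacle is the bookkeeping in the second step: one must cancel the jump and initial terms in $B(\rho, V)$ exactly against the integration-by-parts residues at $t_{n-1}^+$, and then invoke the $\IP_{\qq-1}$-orthogonality at just the right moment to dispose of the residual time-derivative terms. A minor subtlety is the formal identification $f_2 = -A\rho_1$, which is consistent with the spatial norms $\|u_1^{(\qq+1)}\|_2$ and $\|u_1^{(\qq+1)}\|_1$ appearing on the right of \eqref{Error1} and \eqref{Error2}.
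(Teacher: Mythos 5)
Your proposal is correct and follows essentially the same route as the paper: the same splitting $e=\theta+\eta$ with $\theta=U-\Pi_k u$, the same reduction via Galerkin orthogonality and integration by parts (the paper's $B^*$ form) to the identification $f_1=\eta_2$, $f_2=-A\eta_1$ in \eqref{B= L^V}, and the same application of the stability estimate with $l=0$ and $l=-1$. The only cosmetic difference is that you write $\|\rho_1\|_{l+2}$ where the paper keeps $\|A\eta_1\|_l$ and then invokes $\|Av\|\leq\|v\|_2$ and $\|Av\|_{-1}\leq\|v\|_1$; in the $\dot H^s$ framework these are the same thing.
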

\begin{proof}
1. We split the error into two terms, recalling the interpolation 
operator $\Pi_{k}$ in \eqref{interpolation operator},  
\begin{equation*}
\begin{split}
 e
 &=(e_{1},e_{2})=(U_{1},U_{2})-(u_{1},u_{2})\\
 &=\big((U_{1},U_{2})-(\Pi_{k} u_{1},\Pi_{k} u_{2})\big)
  +\big((\Pi_{k} u_{1},\Pi_{k} u_{2})-(u_{1},u_{2})\big)\\
 &= (\theta_{1},\theta_{2})+(\eta_{1},\eta_{2})
 =\theta+\eta.
\end{split}
\end{equation*}
We can estimate the interpolation error $\eta $  
by \eqref{interpolation error}, so we need to 
find estimates for $\theta $. 
Recalling Galerkin orthogonality \eqref{GalerkinOrthogonality}, 
we have 
 \begin{align*}
  B(\theta,V)=-B(\eta,V), 
  \quad \forall V=(V_1,V_2)\in\V_{\qq}\times\V_{\qq}.
 \end{align*}
Then, using the alternative expression \eqref{B^*}, we have
\begin{equation*}
\begin{split}
B(\theta,V)
 &=-B(\eta,V)=-B^{*}(\eta,V)\\
 &=\sum_{n=1}^{N}\int_{I_{n}}\Big\lbrace(\eta_{1},\dot{V}_{1})
  +a(\eta_{2},V_{1})+( \eta_{2},\dot{V}_{2})
  -a(\eta_{1},V_{2})\Big\rbrace \dd t\\ 
 &\hskip .8cm +\sum_{n=1}^{N-1}\Big\{a(\eta^{-}_{1,n},\left[V_{1}\right]_{n} )
  +( \eta^{-}_{2,n},\left[V_{2}\right]_{n})\Big\}
 -a(\eta^{-}_{1,N},V^{-}_{1,N})
  -( \eta^{-}_{2,N},V^{-}_{2,N}).
\end{split}
\end{equation*}
Now, by the fact that $ \eta_{i} $ $ (i=1,2) $ vanishes at the 
time nodes and using the definition of $\Pi_{k} $, it follows that 
$ \dot{V}_{1}$ and $\dot{V}_{2}$ are zero or constants 
on $ I_{n} $ and hence they are orthogonal to the interpolation error.
We conclude that $ \theta=(\theta_1,\theta_2) \in \V_{\qq}\times \V_{\qq}$ satisfies the equation
\begin{align}\label{B(theta,V)}
  B(\theta,V)=\int_{0}^{t_{N}}\Big\lbrace a(\eta_{2},V_{1}) -(A\eta_{1},V_{2})\Big\rbrace \dd t .  
\end{align}
That is,  $ \theta $ satisfies  \eqref{B= L^V} with $f_{1}=\eta_{2} $ 
and $ f_{2}=-A\eta_{1}$.

2. Then applying the stability estimate
\eqref{ stability estimate} and recalling $ \theta_{i,0}=\theta_{i}(0)=0$, we have
\begin{equation} \label{theta_1+theta_2}
\begin{split}
 \| \theta_{1,N}^-\|_{l+1}+\| \theta_{2,N}^-\|_l
 &\leq C \Big ( \| \theta_{1,0}\|_{l+1}
   +\| \theta_{2,0}\|_l
   +\int_{0}^{T}\lbrace\|\eta_{2}\| _{l+1}
   +\| A\eta_{1}\|_l\rbrace \dd t \Big)\\
 &= C  \int_{0}^{T}\left\lbrace \| \eta_{2}\|_{l+1}
   +\| A\eta_{1}\|_l\right\rbrace  \dd t.
\end{split}
\end{equation}
To prove the first a priori error estimate \eqref{Error1}, we set 
$ l=0 $.
In view of $ e=\theta+\eta $ and $ \eta_{i,N}^-=0 $, we have 
\begin{align*}
  \| e_{1,N}^-\|_{1}+\| e_{2,N}^-\| \leq C  \int_0^{T}
\big\lbrace \| \eta_2\|_1+\| A\eta_{1}\|\big\rbrace \dd t.
\end{align*}
Now, using \eqref{interpolation error} and 
$ \| Au\| = \| u\|_2 $, the first a priori error estimate \eqref{Error1} is obtained.

For the second error estimate, we choose $ l=-1 $ in \eqref{theta_1+theta_2}.
In view of  $ e=\theta+\eta $ and $ \eta_{i,N}^-=0 $, we have 
\begin{align*}
  \| e_{1,N}^-\|+\| e_{2,N}^-\|_{-1} \leq C   
\int_0^T\big\{\| \eta_2\|+\| A\eta_1\|_{-1}\big\} \dd t.
\end{align*}
Now, using \eqref{interpolation error} and by the fact that 
$ \| Au\|_{-1} = \| u\|_1 $, implies the second 
a priori error estimate \eqref{Error2}.
\end{proof}

\begin{rem} \label{ExtraLoad}
We note that \eqref{B(theta,V)}, means that  
$f_{1}=\eta_{2} $ and $  f_{2}=-A\eta_{1}$ in 
\eqref{B= L^V}, which is the reason for considering an 
extra load term in the first equation of \eqref{weak form}. 
This way, we can balance between the right operators and 
suitable norms to get optimal order of convergence with 
minimal regularity requirement on the solution. 
Indeed, in \cite{Rauch}, it has been proved that the minimal regularity 
that is required for optimal order convergence for finite element 
discretization of the wave equation is one extra derivative 
compare to the optimal order of convergence, and it cannot be relaxed. 
This means that the regularity requirement on the solution in 
our error estimates are minimal. 
This is in agreement with the error estimates for 
continuous Galerkin finite element approximation 
of second order hyperbolic problems, see, e.g., 
\cite{KovacsLarssonSaedpanah2010, LarssonSaedpanah2010, 
Saedpanah2015}. 
\end{rem}

\subsection{Interior estimates}
Now, we prove uniform in time a priori error estimates for 
dG($\qq$), $\qq=0,1$, based on the linearity of the basis functions. 
\begin{thm}\label{Theorem3}
Let 
$(U_{1},U_{2}) $ and $  (u_{1},u_{2})$
 be the solutions of $  \eqref{B(U,V)}$ and $ \eqref{B(u,v)} $,  respectively.
Then with $  e=(e_{1},e_{2})= (U_{1},U_{2})-(u_{1},u_{2})$ and 
for some constant $ C> 0$ (independent of $T$), we have
\begin{equation}\label{1-E_IN}
\begin{split}
\| e_1\|_{1,J_N}+ \| e_2\|_{J_N}
  &\leq C \Big(k^{\qq+1}\| u_1^{(\qq+1)}\|_{1,J_N} 
   + k^{\qq+1}\| u_2^{(\qq+1)}\|_{J_N}\\
  &\qquad+ \sum_{n=1}^N k_n^{\qq+2}\| u_2^{(\qq+1)}\|_{1,I_n}
   +\sum_{n=1}^N k_n^{\qq+2}\| u_1^{(\qq+1)}\|_{2,I_n}\Big),
\end{split}
\end{equation}
\begin{equation}\label{2-E_IN}
\begin{split}
 \| e_1\|_{J_N}
  \leq C \Big(k^{\qq+1}\| u_1^{(\qq+1)}\|_{J_N}
   &+\sum_{n=1}^N k_n^{\qq+2}\| u_2^{(\qq+1)}\|_{I_n} \\
   &+\sum_{n=1}^N k_n^{\qq+2}\| u_1^{(\qq+1)}\|_{1,I_n}\Big).
\end{split}
\end{equation}
\end{thm}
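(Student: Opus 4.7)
I would mimic the proof of Theorem \ref{Theorem2}, splitting $e=\theta+\eta$ with $\eta=\Pi_{k}u-u$ and $\theta=U-\Pi_{k}u\in\V_{\qq}$. For the interpolation part, standard polynomial interpolation in sup-norm yields $\|\eta_{i}\|_{j,I_{n}}\leq Ck_{n}^{\qq+1}\|u_{i}^{(\qq+1)}\|_{j,I_{n}}$, which upon taking the maximum over $n$ accounts for the $k^{\qq+1}\|u_{i}^{(\qq+1)}\|_{j,I_{N}}$ contributions to \eqref{1-E_IN}--\eqref{2-E_IN}.

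The point of the restriction $\qq\in\{0,1\}$ is that $\theta|_{I_{n}}$ is a polynomial in $t$ of degree at most one, so $t\mapsto\|\theta(t)\|_{j}$ is convex on $I_{n}$ and hence attains its supremum at an endpoint:
\begin{equation*}
\|\theta_{i}\|_{j,I_{n}}\leq\max\bigl(\|\theta_{i,n-1}^{+}\|_{j},\|\theta_{i,n}^{-}\|_{j}\bigr)\leq\|\theta_{i,n}^{-}\|_{j}+\|\theta_{i,n-1}^{-}\|_{j}+\|[\theta_{i}]_{n-1}\|_{j},
\end{equation*}
with only the first term present for $\qq=0$. This reduces the task to estimating $\max_{n}\|\theta_{i,n}^{-}\|_{j}$ and $\max_{n}\|[\theta_{i}]_{n}\|_{j}$ separately.

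The nodal family I would bound by invoking the stability estimate \eqref{ stability estimate} applied to $\theta$ on each $[0,t_{n}]$, using (as in the proof of Theorem \ref{Theorem2}) that $\theta$ solves \eqref{B= L^V} with $f_{1}=\eta_{2}$, $f_{2}=-A\eta_{1}$ and zero initial data. Choosing $l=0$ for \eqref{1-E_IN} and $l=-1$ for \eqref{2-E_IN} and applying \eqref{interpolation error}, the supremum over $n$ gives exactly the $\sum_{n}k_{n}^{\qq+2}(\cdots)$ terms of the theorem.

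For the jump family I would invoke the energy identity \eqref{ stability identity} for $\theta$, whose left side controls $\sum_{n}\|[\theta_{i}]_{n}\|_{j}^{2}$. Bounding the right side by Cauchy--Schwarz in space and pulling the sup of $\|\theta_{i}\|_{j}$ out of the time integral yields $\sum_{n}\|[\theta_{i}]_{n}\|_{j}^{2}\leq CMS$, where $M$ denotes the left-hand side of the target estimate and $S$ the $\sum_{n}k_{n}^{\qq+2}(\cdots)$ right-hand side. Then $\max_{n}\|[\theta_{i}]_{n}\|_{j}\leq\sqrt{CMS}$; combined with the nodal bound this gives $M\leq CS+\sqrt{CMS}$, and Young's inequality finishes the proof as $M\leq CS$. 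The main obstacle is precisely this coupling: in contrast to Theorem \ref{Theorem2}, the sup-in-time norm of $\theta_{i}$ itself appears on the right of the energy identity and must be absorbed, which succeeds thanks to the $\sqrt{M}$ factor arising from the $\ell^{2}$-summability of the jumps.
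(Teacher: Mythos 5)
Your proposal is correct and follows essentially the same route as the paper: the same splitting $e=\theta+\eta$, the identification of $\theta$ as a solution of \eqref{B= L^V} with $f_{1}=\eta_{2}$, $f_{2}=-A\eta_{1}$, the endpoint-supremum observation for $\qq\in\{0,1\}$, and the absorption of the sup-in-time norm of $\theta$ via Young's inequality. The only organizational difference is that the paper reads both the nodal values and the jumps off the energy identity \eqref{ stability identity} simultaneously (squaring the endpoint bound to get $M^{2}\leq CMS$), whereas you bound the nodal family by the stability estimate \eqref{ stability estimate} and reserve the energy identity for the jumps, arriving at $M\leq CS+\sqrt{CMS}$ — both give $M\leq CS$.
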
  
\begin{proof}
1. We split the error into two terms, recalling the interpolation 
operator $\Pi_{k}$ in \eqref{interpolation operator},  
\begin{equation*}
\begin{split}
 e
 &=(e_{1},e_{2})=(U_{1},U_{2})-(u_{1},u_{2})\\
 &=\big((U_{1},U_{2})-(\Pi_{k} u_{1},\Pi_{k} u_{2})\big)
  +\big((\Pi_{k} u_{1},\Pi_{k} u_{2})-(u_{1},u_{2})\big)\\
 &= (\theta_{1},\theta_{2})+(\eta_{1},\eta_{2})
 =\theta+\eta.
\end{split}
\end{equation*}
We can estimate $\eta $  by \eqref{interpolation error}, so we need 
to find estimates for $\theta $. 
Then, similar to the first part of the proof of 
Theorem \ref{Theorem2}, we obtain the equation \eqref{B(theta,V)}. 
That is,  $ \theta $ satisfies  \eqref{B= L^V} with $f_{1}=\eta_{2} $ 
and $ f_{2}=-A\eta_{1}$.

2. Then, using the energy identity \eqref{ stability identity} 
and recalling $ \theta_{i,0}=\theta_{i}(0)=0$, we can write, 
for $1\leq M \leq N$,
\begin{equation*} 
\begin{split}
  \| \theta_{1,M}^{-}\|_{l+1}^2
  &+\| \theta_{1,0}^{+} \|_{l+1}^2
   +\|  \theta_{2,M}^{-} \|_{l}^2
   +\| \theta_{2,0}^{+} \|_{l}^2 
 +\sum_{n=1}^{M-1}
   \Big\{ \|[\theta_1]_n \|_{l+1}^2
   +\|[\theta_2]_n \|_{l}^2\Big\}\\
 &= 2 \int_0^{t_{M}} 
   \big\{a(\eta_2,A^l\theta_1)
         -(A\eta_1,A^l\theta_2) 
   \big\} \\
 &\leq C\Big\{\int_0^{t_M}\|\eta_2\|_{l+1}   
   \| \theta_1\|_{l+1} \dd t
  +\int_0^{t_M}\| A\eta_1\|_l 
   \|\theta_2\|_l \dd t \Big\}\\
 &\leq C\Big\{\int_0^{t_M}\| \eta_2\|_{l+1} \dd t  
   \| \theta_1\|_{l+1,J_M}
  +\int_{0}^{t_M}\|A\eta_1\|_l \dd t 
   \|\theta_2\|_{l,J_M}\Big\}, 
\end{split}
\end{equation*}
where, Cauchy-Schwarz inequality was used. 
This implies
\begin{equation}\label{(V_i=theta_i)}
\begin{split}
  \| \theta_{1,M}^{-}\|_{l+1}^2
  &+\| \theta_{1,0}^{+} \|_{l+1}^2
   +\|  \theta_{2,M}^{-} \|_{l}^2
   +\| \theta_{2,0}^{+} \|_{l}^2
 +\sum_{n=1}^{M-1}
   \Big\{ \|[\theta_1]_n \|_{l+1}^2
   +\|[\theta_2]_n \|_{l}^2\Big\}\\
 &\leq C\Big\{\int_0^{t_N}\| \eta_2\|_{l+1} \dd t  
   \| \theta_1\|_{l+1,J_N}
  +\int_0^{t_N}\|A\eta_{1}\|_{l} \dd t 
   \|\theta_2\|_{l,J_N}\Big\}.
\end{split}
\end{equation}

Since $ \qq= 0,1 $, we have 
\begin{equation*}
\begin{split}
 \| \theta_1\|_{l+1,J_N}
 &\leq \max_{1\leq n\leq N} \Big(\| \theta_{1,n}^{-}\|_{l+1}
   +\| \theta_{1,n-1}^{+}\|_{l+1} \Big)\\
 &\leq \max_{1\leq n\leq N} \| \theta_{1,n}^{-}\|_{l+1}
   +\max_{1\leq n\leq N} \| \theta_{1,n-1}^{+}\|_{l+1}\\
 &\leq\max _{1\leq n\leq N} \| \theta_{1,n}^{-}\|_{l+1}
   +\max_{1\leq n\leq N} \Big( \| [ \theta_{1}]_{n-1} \|_{l+1}
   +\| \theta_{1,n-1}^{-}\|_{l+1}\Big)\\
 &\leq \max_{1\leq n\leq N}\| \theta_{1,n}^{-}\|_{l+1}
   +\max_{1\leq n\leq N-1} \Big( \| [ \theta_1]_n \|_{l+1}
   + \| \theta_{1,n}^{-}\|_{l+1}\Big)+\| \theta_{1,0}^{+}\|_{l+1}\\ 
 & \leq 2 \max_{1\leq n\leq N} \| \theta_{1,n}^{-}\|_{l+1}
   +\max_{1\leq n\leq N-1} \| [ \theta_1]_n \|_{l+1} 
   +\| \theta_{1,0}^{+}\|_{l+1}.
\end{split}
\end{equation*}
Note that $ \| \theta_{1,0}^{-}\|_{l+1}=\|  U_{1,0}^{-}-\Pi_{k} u_{1,0}\|_{l+1} =0 $ and hence 
\begin{align}\label{theta_1-J_N}
  \| \theta_1 \|_{l+1,J_N}^2
   \leq C \max _{1\leq n\leq N}\Big(\| \theta_{1,n}^{-}\|_{l+1}^2
    +\sum_{n=1}^{N-1} \| [\theta_1]_n\|_{l+1}^2
    +\| \theta_{1,0}^{+}\|_{l+1}^2\Big),
\end{align}
and in a similar way for $ \|\theta_2\|_{l,J_N} $, we have
\begin{align} \label{theta_2-J_N}
  \| \theta_2 \|_{l,J_N}^2
  \leq C \max _{1\leq n\leq N}\Big(\| \theta_{2,n}^{-}\|_l^2
    +\sum_{n=1}^{N-1} \| [\theta_{2}]_n\|_l^2
    +\| \theta_{2,0}^{+}\|_l^2\Big). 
\end{align}
Now, using \eqref{theta_1-J_N} and \eqref{theta_2-J_N} in 
\eqref{(V_i=theta_i)} and the fact that 
$ab\leq \frac{1}{4\epsilon}a^2+\epsilon b^2 $ for some $\epsilon>0$, 
we have 
\begin{equation*}
\begin{split}
  \| \theta_1 \|_{l+1,J_N}^2+ \| \theta_2 \|_{l,J_N}^2
  &\leq C\Big\{\int_0^{t_N} \| \eta_2\|_{l+1} \dd t 
    \|\theta_1 \|_{l+1,J_N}
    +\int_0^{t_N} \| A\eta_1\|_{l} \dd t \|\theta_2 \|_{l,J_N}\Big\}\\
  &\leq C\bigg\{ 
   \frac{1}{4\epsilon}\Big(\int_0^{t_N}\|\eta_2\|_{l+1}\dd t \Big)^2
    +\epsilon\| \theta_1 \|_{l+1,J_N}^2\\
    &\qquad+\frac{1}{4\epsilon}\Big(\int_0^{t_N}\| A\eta_1\|_{l} \dd t \Big)^2
    +\epsilon\| \theta_2 \|_{l,J_N}^2 \bigg\},
\end{split}
\end{equation*}
and as a result, we obtain
\begin{equation*}
  \| \theta_1 \|_{l+1,J_N}^2+ \| \theta_2 \|_{l,J_N}^2
   \leq C \Big\{\int_0^{t_N}\| \eta_2\|_{l+1} \dd t   
    +\int_0^{t_N}\| A\eta_1\|_{l} \dd t \Big\}^2,
\end{equation*}
that implies
\begin{equation}\label{theta_I_{N}}
\begin{split}
  \| \theta_1 \|_{l+1,J_N}+ \| \theta_2 \|_{l,J_N}
   \leq  C\Big\{\int_{0}^{t_N}\| \eta_2\|_{l+1} \dd t 
    +\int_0^{t_N}\| A\eta_1\|_l \dd t  \Big\}.   
\end{split}
\end{equation}

To prove the first a priori error estimate \eqref{1-E_IN}, 
we set $l=0$. In view of $ e=\theta+\eta $, we have  
\begin{equation*}
  \| e_1\|_{1,J_N} + \| e_2\|_{J_N} 
    \leq 
   \| \eta_1\|_{1,J_N} + \| \eta_2\|_{J_N}
    + C\Big\{\int_0^{t_N}\| \eta_2\|_1 \dd t 
    + \int_0^{t_N}\| A\eta_1\| \dd t  \Big\}.
\end{equation*}
Now, using \eqref{interpolation error}, we have
\begin{equation*}
\begin{split}
 &\int_0^{t_N}\| \eta_2\|_1 \dd t 
  =\sum_{n=1}^N\int_{I_n}\| \eta_2\|_1 \dd t  
  \leq \sum_{n=1}^N k_n^{\qq+2}\| u_2^{(\qq+1)}\|_{1,I_n}, \\
 &\int_0^{t_N}\| A\eta_1\|\dd t
  =\sum_{n=1}^N\int_{I_n}\| A\eta_1\| \dd t   
  \leq \sum_{n=1}^N k_n^{\qq+2}\| Au_1^{(\qq+1)}\|_{I_n},
\end{split}
\end{equation*}
that, having $ \| Au\| = \| u\|_{2} $, the first a priori error estimate \eqref{1-E_IN} is obtained.

For the second error estimate, we choose $ l=-1 $ in \eqref{theta_I_{N}}.
In view of $ e=\theta+\eta$, we have 
\begin{equation*}
  \| e_{1}\|_{J_N}
   \leq \| \eta_1\|_{J_N} + C\Big\{\int_0^{t_N}\| \eta_2\| \dd t 
    +\int_0^{t_N}\| A\eta_1\|_{-1} \dd t \Big\}.
\end{equation*}
Now, using \eqref{interpolation error} and by the facts that 
$ \| Au\|_{-1} = \| u\|_{1}$, implies the second 
a priori error estimate \eqref{2-E_IN}.
\end{proof}

\begin{rem} \label{Remark_4}
We note that in the second step of the proof of Theorem 
\ref{Theorem2} it was enough to use the stability estimate 
\eqref{ stability estimate}. But for uniform in time a 
priori error estimates \eqref{1-E_IN}-\eqref{2-E_IN} we 
need to use all information about the jump terms, 
and therefore we used the energy identity 
\eqref{ stability identity} in the second step of 
Theorem \ref{Theorem3}. 
\end{rem}

\begin{rem}   \label{A tilde: section 4}
Theorem \ref{Theorem2} and Theorem \ref{Theorem3}, 
recalling Remark \ref{A tilde: section 3}, hold true for 
the dG($\qq$) approximation of \eqref{Wave-eq-tildeA}, 
using the corresponding norms  
$\vertiii{\cdot}_s$ and $\vertiii{\cdot}_{s,J_N}$, 
instead of $\|\cdot\|_s$ and $\|\cdot\|_{s,J_N}$, respectively. 
\end{rem}

\section{Full discretization} \label{FullDiscrete}
In this section we study  
full discretization of \eqref{Wave-eq}
by combining dG($\qq$), $\qq=0,1$ in time and 
continuous Galerkin method of order $ \rr\geq 1 $, cG($\rr$) 
in space, to be called dG($\qq$)-cG($\rr$). 
Then, we prove a stability identity and a stability estimate 
of the full discrete method. 
We use a combination of the idea in section 
\ref{A priori} with a priori error analysis for 
continuous Galerkin finite element 
approximation in \cite{KovacsLarssonSaedpanah2010}. 
This idea was used in the context of continuous Galerkin 
approximation (only cG(1)-cG(1) in time and space) of some second 
order hyperbolic integro-differential equations,  
with applications in linear/fractional order viscoelasticity, 
see \cite{LarssonSaedpanah2010, Saedpanah2015}.

\subsection{dG($ \qq $)-cG($ \rr $) formulation}
Let $ S_{h}\subset \V =\dot{H}^{1}(\Omega) $ be a family of 
finite element spaces of continuous piecewise 
polynomials of degree at most $ \rr \geq 1$, 
with $ h $ denoting the maximum diameter of the elements. 

To apply dG($\qq$) method to formulate the full discrete 
dG($\qq$)-cG($\rr$), recalling the notation in section \ref{dG}, 
we let 
$\IP_{\qq}=\IP_{\qq}(S_{h})
 =\{  v: v(t)=\sum_{j=0}^{\qq}v_{j}t^{j}, v_{j}\in S_{h}\} $. 
For each time subinterval $ I_{n} $ we denote $ S^{n}_{h}$, 
and define the finite element spaces  
$  \V_{h,\qq}=\V_{\qq}(S_{h}) =\{ v:
 v|_{S_n}\in \IP_{\qq}( S^{n}_{h} ), \ 
 n=1, \dots, N\} $. 
We note that 
$\V_{h,\qq} \subset \V_{\qq} \subset \W$
and therefore we use the framework in section \ref{dG}. 
We denote the full discrete approximate solution by 
$U=(U_1,U_2)$, too. 

Then $ U=(U_{1},U_{2})\in \V_{h,\qq}\times\V_{h,\qq}$, 
the solution of dG($\qq$)-cG($\rr$), satisfies 
\begin{equation}\label{B(U,V)-Full}
\begin{split}
 &B(U,V)=L(V), \qquad\forall V=(V_{1},V_{2}) \in \V_{h,\qq}\times\V_{h,\qq},\\
 &U^{-}_{0}= U_{h,0},
\end{split}
\end{equation}
where $U_{h,0}=(U^{-}_{1,0},U^{-}_{2,0})=(u_{h,0},v_{h,0})$, and $u_{h,0}$ and $v_{h,0}$ are suitable 
approximations (to be chosen) of the initial data 
$u_{0}$ and $v_{0}$ in $S_h$, respectively. 
Here, the linear form $L$ 
is defined on $\W\times\W$ by 
\begin{equation}\label{B(u,v)=L(v)-Full}
 L\big((v_{1},v_{2})\big)
   =\sum_{n=1}^{N}\int_{I_{n}}( f,v_{2}) \dd t 
   +a(u_{h,0},v^{+}_{1,0})+( v_{h,0},v_{2,0}^{+}).
\end{equation}
This and  \eqref{B(u,v)} imply,  
for the error $ e=(e_{1},e_{2}) =(U_1,U_2)-(u_1,u_2)$, 
\begin{equation*} 
 B(e,V) = 
 a\big((u_{h,0}-u_0),v^{+}_{1,0})\big) 
  + \big( (v_{h,0}-v_0),v_{2,0}^{+}\big), 
  \qquad  \forall  V=(V_{1},V_{2}) \in \V_{h,\qq}\times\V_{h,\qq}.
\end{equation*}
Therefore, using the natural choice
\begin{equation} \label{IC choice}
 U^{-}_{1,0}=u_{h}^{0}=\R_h u_{0}, 
 \quad U^{-}_{2,0}=v^{0}_{h}=\PP_h v_{0},
\end{equation}
we have the Galerkin orthogonality
\begin{equation} \label{GalerkinOrthogonality-Full}
 B(e,V) = 0, 
  \qquad  \forall V=(V_{1},V_{2}) \in \V_{h,\qq}\times\V_{h,\qq}.
\end{equation}
Here, the orthogonal projections 
$\R_{h,n}: \V \rightarrow S^{n}_{h}$ and
$ \PP_{h,n}: H \rightarrow S^{n}_{h}$ are defined, respectively, by 
\begin{equation}\label{omega,theta}
\begin{split}
 a&(\R_{h,n}v-v,\chi)=0, 
  \qquad\forall v\in \V, \ \chi\in S^{n}_{h},\\
 &(\PP_{h,n}v-v,\chi)=0, 
  \qquad\forall v\in H, \ \chi\in S^{n}_{h}. 
\end{split}
\end{equation}
We define $ \R_h v $ and $ \PP_h v $, 
such that $ (\R_h v)(t)=\mathcal{R}_{h,n}v(t)$ and 
$ (\PP_h v)(t)=\PP_{h,n}v(t) $, for 
$ t\in I_{n} \ (n=1,\cdots, N)$.
We have the following error estimates:
\begin{equation}\label{ErrorR_h}
\|(\R_h-I)v\|+h\|(\R_h-I)v\|_{1}\leq Ch^{s}\|v\|_{s}, 
\quad \text{for}\quad v\in H^{s}\cap \V,\quad 0\leq s \leq r,
\end{equation}
\begin{equation}\label{ErrorP_h}
 h^{-1}\|(\PP_h-I)v\|_{-1}+\|(\PP_h-I)v\|
\leq Ch^{s}\|v\|_{s}, 
\quad \text{for}\quad v\in H^{s}\cap \V,\quad 0\leq s \leq r.
\end{equation}

We define the discrete linear operator 
$ A_{n,m}: S^{m}_{h}\rightarrow S^{n}_{h} $ by 
\begin{equation*}
a(v_{m},w_{n})=(A_{n,m}v_{m},w_{m}) 
\qquad \forall v_{m} \in  S^{m}_{h}, 
\  w_{n}\in  S^{n}_{h},
\end{equation*}
and $ A_{n}=A_{n,n} $, with discrete norms 
\begin{equation*}
\| v_{n}\|_{h,l}= \|A^{l/2}_{n} v_{n}\|
=\sqrt{(v_{n},A^{l}_{n}v_{n})}, 
\qquad v_{n}\in S^{n}_{h}, \ l\in \IR.
\end{equation*}
We introduce $ A_{h} $ such that $ A_{h}v=A_{n}v $ for 
$  v\in S^{n}_{h}$. 
We note that $\PP_{h}A= A_{h}\R_h$.
\subsection{Stability}
In this section we present a stability (energy) identity 
and stability estimate, that are used in a priori error 
analysis. 
Therefore, similar to $\S \ref{dG}$, we need a stability 
identity for a slightly more general problem, that is 
$  U=(U_1,U_2)\in \V_{h,\qq}\times \V_{h,\qq}$ such that 
\begin{equation}\label{B_{h,n}= L^V}
 B(U,V)=\hat{L}(V),   \qquad  \forall  V=(V_1,V_2)\in \V_{h,\qq}\times \V_{h,\qq},
\end{equation}
where the linear form $\hat L$ 
is defined on $\W\times\W$ by
\begin{align*}
 \hat{L}((v_{1},v_{2}))
 =\sum _{n=1}^{N} \int_{I_{n}}\Big\{ a(f_{1},v_{1})
  +( f_{2},v_{2}) \Big\} \dd t
  +a( u_{h,0},v^{+}_{1,0}) + (v_{h,0},v_{2,0}^{+}).
\end{align*}
\begin{thm} \label{A tilde: stability full discrete}
Let $ U=(U_{1},U_{2}) $ be a solution of \eqref{B_{h,n}= L^V}. 
Then for any $ T>0 $ and $ l\in\IR $, we have the energy identity
\begin{equation}\label{stability identity-Full} 
\begin{split}
\| &U_{1,N}^{-}\|_{h,l+1}^{2}
 +\| U_{2,N}^{-}\|_{h,l}^{2}
 +\sum _{n=0}^{N-1}\left\lbrace \|[U_{1}]_{n}\|_{h,l+1}^{2}
  +\|[U_{2}]_{n}\|_{h,l}^{2} \right\rbrace  \\
 &= \| u_{h,0}\|_{h,l+1}^{2}+\| v_{h,0}\|_{h,l}^{2} 
   +2\int_{0}^{T}\Big\{  a(\R_h f_{1},A_{h}^{l}U_{1})
  +(\PP_h f_{2},A_{h}^{l}U_{2}) \Big\} \dd t .
\end{split}
\end{equation}
Moreover, for some constant $ C> 0$ (independent of $T$), 
we have the stability estimate 
\begin{equation}\label{stability estimate-Full}
\begin{split}
\| U_{1,N}^{-}\|_{h,l+1}
  +\| U_{2,N}^{-}\|_{h,l}
 \leq C \Big(&  \| u_{h,0}\|_{h,l+1}+\| v_{h,0}\|_{h,l} \\ 
  &+\int_{0}^{T} \left\lbrace \|\R_h f_{1}\|_{h,l+1}
 +\| \PP_h f_{2}\| _{h,l}\right\rbrace  \dd t \Big).
\end{split}
\end{equation}
\end{thm}  
\begin{proof}
We set $ V_{i}=A_{h}^{l}U_{i} $ for~ $ i=1,2 $ in \eqref{B_{h,n}= L^V} to obtain 
\begin{equation*}
\begin{split}
\frac{1}{2}\sum_{n=1}^{N}&\int_{I_{n}}
 \frac{\partial}{\partial t}\|U_{1}\|_{h,l+1}^{2}\dd t 
  +\frac{1}{2}\sum_{n=1}^{N}\int_{I_{n}}
   \frac{\partial}{\partial t} \|U_{2}\|_{h,l}^{2}\dd t \\
 &+\sum_{n=1}^{N-1}\Big\{a([U_{1}]_{n},A_{h}^{l}U_{1,n}^{+})
   +([U_{2}]_{n},A_{h}^{l}U_{2,n}^{+})\Big\}\\
 &+a(U_{1,0}^{+},A_{h}^{l}U_{1,0}^{+})
    +(U_{2,0}^{+},A_{h}^{l}U_{2,0}^{+})\\
   =\int_{0}^{T}&\Big\{ a(\R_h f_{1},A_{h}^{l}U_{1})
   +(\PP_h f_{2},A_{h}^{l}U_{2})\Big\} \dd t 
  +a(u_{h,0},A_{h}^{l}U^{+}_{1,0})+(v_{h,0},A_{h}^{l}U^{+}_{2,0}).
\end{split}
\end{equation*}
Now, similar to the proof Theorem \ref{Theorem-stability1}, 
the stability identity \eqref{stability identity-Full} and 
stability estimate \eqref{stability estimate-Full} are proved.
\end{proof}

\begin{rem}   \label{A tilde: section 5}
For the model problem \eqref{Wave-eq-tildeA}, we recall Remark 
\ref{A tilde: section 2} and we define the orthogonal projection 
$\tilde \R_{h,n}: \V \rightarrow S^{n}_{h}$  by 
\begin{equation*} 
 \tilde a(\tilde \R_{h,n}v-v,\chi)=0, 
  \qquad\forall v\in \V, \ \chi\in S^{n}_{h}. 
\end{equation*}
We define $\tilde  \R_h v $, 
such that $ (\tilde \R_h v)(t)=\tilde \R_{h,n}v(t)$, for 
$ t\in I_{n} \ (n=1,\cdots, N)$, 
and we have the following error estimates:
\begin{equation*}
\|(\tilde \R_h-I)v\|+h\vertiii{(\tilde \R_h-I)v }_{1}
\leq Ch^{s}\vertiii{v}_{s}, 
\quad \text{for}\quad v\in H^{s}\cap \V,\quad 0\leq s \leq r,
\end{equation*}

We also define the discrete linear operator 
$\tilde A_{n,m}: S^{m}_{h}\rightarrow S^{n}_{h} $ by 
\begin{equation*}
\tilde a(v_{m},w_{n})=(\tilde A_{n,m}v_{m},w_{m}) 
\qquad \forall v_{m} \in  S^{m}_{h}, 
\  w_{n}\in  S^{n}_{h},
\end{equation*}
and $ \tilde A_{n}=\tilde A_{n,n} $, with discrete norms 
\begin{equation*}
\vertiii{ v_{n}}_{h,l}= \|\tilde A^{l/2}_{n} v_{n}\|
=\sqrt{(v_{n},\tilde A^{l}_{n}v_{n})}, 
\qquad v_{n}\in S^{n}_{h}, \ l\in \IR.
\end{equation*}
We introduce $ \tilde A_{h} $ such that 
$ \tilde A_{h}v = \tilde A_{n}v $ for 
$  v\in S^{n}_{h}$. 

Now, Theorem \ref{A tilde: stability full discrete}, 
recalling Remark \ref{A tilde: section 3}, holds true for 
the dG($\qq$) approximation of \eqref{Wave-eq-tildeA}, 
with norms $\vertiii{\cdot}_{h,s}$, the energy inner product 
$\tilde a(\cdot,\cdot)$ and the operators $\tilde A_h$ and 
$\tilde\R_h$, instead of $\|\cdot\|_{h,s},\ a(\cdot,\cdot)$, $A_h$ 
and $\R_h$, respectively.

\end{rem}
\section{A priori error estimates for full dicretization} 
\label{A priori_Full}
Here we combine the idea in section \ref{A priori} 
with the approach that was used for continuous Galerkin  
finite element approximation for second order hyperbolic problems 
in \cite{KovacsLarssonSaedpanah2010, LarssonSaedpanah2010, Saedpanah2015}. This is an extension of a priori error analysis 
to dG($\qq$)-cG($\rr$) methods. 

Similar to the temporal discretization in section \ref{A priori}, 
first we prove a priori error estimates for a general 
dG($\qq$)-cG($\rr$) approximation solution at the temporal nodal 
points, for which it is enough to use the stability estimate 
\eqref{stability estimate-Full}. Then, for uniform in time a 
priori error estimates, we use the energy identity 
\eqref{stability identity-Full}. 
Our analysis is limited to $\qq=0,1$, such that we can 
use the linearity property of the basis function to prove uniform 
in time error estimates. 

\begin{rem} \label{Remark_dG_cG}
For the error analysis of continuous Galerkin time-space  
discretization of second order hyperbolic problems, 
see, e.g., \cite[Remark 3.2]{Saedpanah2015}, 
we need to assume that 
$S_h^{n-1} \subset S_h^{n}, \ n=1,\dots, N$, 
that is, we do not change the 
spatial mesh or just refine the spatial mesh from one time 
level to the next one. This limitation on the spatial mesh 
is not needed for discontinuous Galerkin approximation in time, 
i.e., dG($\qq$)-cG($\rr$). 
\end{rem}

\subsection{Estimates at the nodes}
\paragraph*{}
\begin{thm}  \label{Theorem-Full-1}
Let $ (U_1,U_2) $ and $ (u_1,u_2)$ be the solutions of 
$\eqref{B_{h,n}= L^V}$ and $ \eqref{B(u,v)}$, respectively.
Then with $  e=(e_{1},e_{2})= (U_{1},U_{2})-(u_{1},u_{2})$ and 
for some constant $ C> 0$ (independent of $T$), we have
\begin{equation}\label{Error-Full1_1(Rh,Rh)}
\begin{split}
 \| e_{1,N}^-\|_{1} + \| e_{2,N}^-\|
 &\leq C\bigg(\sum_{n=1}^N 
  k_n^{\qq+1}\int_{I_n}\big\{\| u_{2}^{(\qq+1)}\|_{1}
   +\| u_{1}^{(\qq+1)}\|_{2}\big\} \dd t \\
 &\qquad +h^{\rr}\Big\{
   \|v_0\|_{\rr}
   +\int_{0}^{T}\|\dot{u}_{2}\|_{\rr} \dd t
   +\| u_{1,N}\|_{\rr+1} + \| u_{2,N}\|_{\rr}\Big\}\bigg),
\end{split}
\end{equation}
\begin{equation}
\begin{split}\label{Error-Full1_3(Rh,Ph)}
 \| e_{1,N}^-\|
 &\leq C\Big(\sum_{n=1}^Nk_n^{\qq+1}\int_{I_n}
   \big\{\| u_{2}^{(\qq+1)}\|
     +\| u_{1}^{(\qq+1)}\|_{1}\big\} \dd t \\
 &\qquad +h^{\rr+1}\big\{\int_{0}^{T}\| u_{2}\|_{\rr+1} \dd t
    +\| u_{1,N}\|_{\rr+1}\big\}\Big).
\end{split}
\end{equation}
\end{thm}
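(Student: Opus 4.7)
The approach extends Theorem \ref{Theorem2} by combining its argument with the spatial projection techniques of \cite{KovacsLarssonSaedpanah2010, LarssonSaedpanah2010}. I would decompose $e = \theta + \eta$ with $\theta \in \V_{h,\qq}$, choosing
\begin{equation*}
\eta_1 = \Pi_k \R_h u_1 - u_1, \qquad \eta_2 = \Pi_k \R_h u_2 - u_2, \qquad \theta = U - (\Pi_k \R_h u_1, \Pi_k \R_h u_2).
\end{equation*}
The Ritz projection is used on both components so that its $a$-orthogonality against $S_h$ eliminates a maximal number of cross terms. The initial choice $U_{1,0}^- = \R_h u_0$ and $U_{2,0}^- = \PP_h v_0$ leaves $\theta$ with initial data reducing to a $(\PP_h - \R_h) v_0$ mismatch in the second slot, which ultimately contributes the $h^r \|v_0\|_r$ term.

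By Galerkin's orthogonality \eqref{GalerkinOrthogonality-Full} and the adjoint expression \eqref{B^*}, $B(\theta, V) = -B^{*}(\eta, V)$ for every $V \in \V_{h,\qq}$. The combination of $\R_h$-orthogonality (in $a(\cdot,\cdot)$) and $\Pi_k$-orthogonality against $\IP_{\qq-1}$ in time annihilates $a(\eta_1, \dot V_1)$, the $(\R_h - I) u_i$ pieces of $a(\eta_2, V_1)$ and $a(\eta_1, V_2)$, and all jump and end-point contributions in the $a$ inner product. What survives is essentially
\begin{equation*}
\int_0^{t_N}\!\bigl\{a(\eta_2, V_1) - a(\eta_1, V_2)\bigr\}\dd t - \int_0^{t_N}\!\bigl((\R_h - I) u_2, \dot V_2\bigr)\dd t + \text{nodal terms involving } (\R_h - I)u_2,
\end{equation*}
since $(\eta_2, \dot V_2)$ is not annihilated by $\R_h$ (which is $a$- not $L_2$-orthogonal to $S_h$). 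A piecewise-in-time integration by parts converts the middle residual into $+\int_0^{t_N}((\R_h - I)\dot u_2, V_2)\dd t$ plus boundary evaluations which, after combining with the pre-existing nodal terms from $B^{*}$, telescope to $-((\R_h - I) u_{2,N}, V_{2,N}^-) + ((\R_h - I) v_0, V_{2,0}^+)$, with the interior jumps cancelling upon summation.

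With this reduction, $\theta$ satisfies \eqref{B_{h,n}= L^V} with effective data $(f_1, f_2) = (\eta_2, -A\eta_1 - (\R_h - I)\dot u_2)$ and an initial condition carrying the $(\R_h - \PP_h) v_0$ correction. Applying the stability estimate \eqref{stability estimate-Full} with $l = 0$ gives \eqref{Error-Full1_1(Rh,Rh)}: combining with $e = \theta + \eta$, the triangle splitting $\Pi_k \R_h - I = (\Pi_k - I)\R_h + (\R_h - I)$, and the bounds \eqref{interpolation error}, \eqref{ErrorR_h}, \eqref{ErrorP_h} reproduces each term on the right-hand side — the purely temporal $\sum k_n^{\qq+1}\int_{I_n}(\cdots)\dd t$ pieces from the $(\Pi_k - I)$ part, $h^r\|u_{1,N}\|_{r+1}$ and $h^r\|u_{2,N}\|_r$ from $\|\eta_{i,N}^-\|$ and from the telescoped endpoint term, $h^r\int\|\dot u_2\|_r \dd t$ from the integration-by-parts residual, and $h^r\|v_0\|_r$ from the initial data correction. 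The estimate \eqref{Error-Full1_3(Rh,Ph)} follows analogously with $l = -1$, exploiting the sharper $L_2$-duality bounds on $(\R_h - I) u_i$.

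The main technical obstacle will be managing the integration by parts in time in the presence of the piecewise discontinuous test function $V_2$: each $((\R_h - I) u_2, \dot V_2)$ residual on $I_n$ produces boundary terms at $t_{n-1}^+$ and $t_n^-$ that must be telescoped carefully together with the existing jump contributions $(\eta_{2,n}^-, [V_2]_n)$ from $B^{*}$, so that the interior jumps cancel and only clean $t = 0$ and $t = t_N$ evaluations survive. A secondary subtlety is converting between the discrete norms $\|\cdot\|_{h,l}$ (in which the stability estimate is phrased) and the continuous Sobolev norms in the final bound, handled by the identity $\PP_h A = A_h \R_h$ together with the $L_2$/$H^1$-stability of $\R_h$ and $\PP_h$. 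Once this bookkeeping is in place, the remainder of the argument proceeds in parallel to the temporal-discretization proof of Theorem \ref{Theorem2}.
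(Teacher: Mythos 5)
Your treatment of the first estimate \eqref{Error-Full1_1(Rh,Rh)} is essentially the paper's argument: Ritz projection on both components, time-orthogonality of $\Pi_k$ and $a$-orthogonality of $\R_h$ to strip $B^*$ down to $\int\{a(\eta_2,V_1)-a(\eta_1,V_2)\}\,\dd t$ plus an $(\omega_2,\dot V_2)$ residual, integration by parts in time, and the stability estimate \eqref{stability estimate-Full} with $l=0$ applied to $f_1=\eta_2$, $f_2=-A\eta_1-\dot\omega_2$ and the initial mismatch $(\PP_h-\R_h)v_0$. Two small bookkeeping slips: the surviving residual is $+\int(\omega_2,\dot V_2)\,\dd t$ (the sign in $-B^*$), and after integration by parts the $t_N$ boundary evaluation cancels exactly against the $-(\omega_{2,N}^-,V_{2,N}^-)$ already present in $-B^*(\omega,V)$, so only the $t=0$ term $-(\omega_{2,0}^-,V_{2,0}^+)$ survives; no $t_N$ term should remain in the right-hand side of the identity for $B(\theta,V)$. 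Neither slip affects the final bound.

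The genuine gap is in the second estimate \eqref{Error-Full1_3(Rh,Ph)}. You propose to keep $\R_h$ on both components and simply rerun the argument with $l=-1$. That does not produce the stated bound: with $f_2=-A\eta_1-\dot\omega_2$ you must control $\int_0^T\|\PP_h\dot\omega_2\|_{h,-1}\,\dd t$ and the initial term $\|(\PP_h-\R_h)v_0\|_{h,-1}$, which (even granting a negative-norm Ritz estimate the paper never states --- \eqref{ErrorR_h} is only given in $L_2$ and $H^1$) yields $h^{r+1}\int\|\dot u_2\|_r\,\dd t + h^{r+1}\|v_0\|_r$ rather than the asserted $h^{r+1}\int\|u_2\|_{r+1}\,\dd t$; the right-hand side of \eqref{Error-Full1_3(Rh,Ph)} contains no time derivative of $u_2$ and no $v_0$. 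The missing idea is to change the spatial projection on the \emph{velocity} component for this estimate: take $\eta_2=(\Pi_k-I)\PP_h u_2$ and $\omega_2=(\PP_h-I)u_2$ while keeping $\R_h$ on $u_1$. Then the $L_2$-orthogonality of $\PP_h$ annihilates $(\omega_2,\dot V_2)$, the jumps $(\omega_{2,n}^-,[V_2]_n)$, the endpoint term, and the initial mismatch ($\theta_{2,0}=0$), so no integration by parts is needed and no $\dot u_2$ or $v_0$ terms arise; the price is that $a(\omega_2,V_1)$ no longer vanishes, which moves $\omega_2$ into $f_1=\eta_2+\omega_2$, where for $l=-1$ it is measured in $\|\cdot\|_{h,0}$ and gives exactly $\|\R_h(\PP_h-I)u_2\|=\|\PP_h(I-\R_h)u_2\|\le Ch^{r+1}\|u_2\|_{r+1}$. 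Without this switch of projections the argument proves a different estimate, not \eqref{Error-Full1_3(Rh,Ph)}.
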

\begin{proof}
1. We split the error as:
\begin{equation*}
\begin{split}
 e=U-u=\big(U-\Pi_{k} \Pi_{h}u\big)
       +\big(\Pi_{k} \Pi_{h}u- \Pi_{h}u\big)
       +\big( \Pi_{h}u-u\big) 
  =\theta+\eta+\omega,
\end{split}
\end{equation*}
where $ \Pi_{k} $ is the linear interpolation operator defined by 
\eqref{interpolation operator}, and $ \Pi_{h} $ (to be specified)  
is in terms of the projectors $ \R_h $ or $ \PP_h $ in 
\eqref{omega,theta}. 

2. To prove the first error estimate we choose
\begin{equation*}
 \theta_{i}=U_{i}-\Pi_{k} \R_h u_{i}, \quad \eta_{i}
 =(\Pi_{k}-I)\R_h u_{i},\quad \omega_{i}
 =(\R_h-I)u_{i},\quad  i=1,2.
\end{equation*}
Therefore, using $ \theta=e-\eta-\omega $ and the Galerkin
orthogonality \eqref{GalerkinOrthogonality-Full}, we get
 \begin{align*}
  B(\theta,V)
  =-B(\eta,V)-B(\omega,V), 
  \quad \forall  V=(V_1,V_2)\in \V_{h,\qq}\times \V_{h,\qq}.
 \end{align*}
Then, recalling the alternative expression \eqref{B^*}, 
we have
\begin{equation*}
\begin{split}
B(\theta,V)
 &=-B(\eta,V)-B(\omega,V) 
  =-B^{*}(\eta,V)-B^{*}(\omega,V) \\
 &=\sum_{n=1}^{N}\int_{I_{n}}\Big\lbrace a(\eta_{1},\dot{V}_{1})
  +a(\eta_{2},V_{1})+( \eta_{2},\dot{V}_{2})
   -a(\eta_{1},V_{2})\Big\rbrace \dd t\\ 
 &\quad +\sum_{n=1}^{N-1}\Big\{a(\eta^{-}_{1,n},\left[V_{1}\right]_{n} )
  +( \eta^{-}_{2,n},\left[V_{2}\right]_{n})\Big\}\\
 &\quad -a(\eta^{-}_{1,N},V^{-}_{1,N})
  -( \eta^{-}_{2,N},V^{-}_{2,N})\\
  &\quad +\sum_{n=1}^{N}\int_{I_{n}}\Big\lbrace a(\omega_{1},\dot{V}_{1})
  +a(\omega_{2},V_{1})+(\omega_{2},\dot{V}_{2})
   -a(\omega_{1},V_{2})\Big\rbrace \dd t\\ 
 &\quad +\sum_{n=1}^{N-1}\Big\{a(\omega^{-}_{1,n},\left[V_{1}\right]_{n} )
  +( \omega^{-}_{2,n},\left[V_{2}\right]_{n})\Big\}\\
 &\quad -a(\omega^{-}_{1,N},V^{-}_{1,N})
  -(\omega^{-}_{2,N},V^{-}_{2,N}).
\end{split}
\end{equation*}
Now, using the definition of $\Pi_{k} $, in 
\eqref{interpolation operator} and the definition of 
$ \omega $ in \eqref{omega,theta}, we conclude that 
$  \theta=(\theta_1,\theta_2)\in \V_{h,\qq}\times \V_{h,\qq}$ satisfies the equation
 \begin{equation*}
  \begin{split}
B(\theta,V)
 &=\sum_{n=1}^{N}\int_{I_{n}}\Big\lbrace a(\eta_{2},V_{1}) 
  -a(\eta_{1},V_{2})\Big\rbrace \dd t+\sum_{n=1}^{N}\int_{I_{n}} (\omega_{2},\dot{V}_{2})\dd t\\
 &\hskip .8cm+\sum_{n=1}^{N-1}( \omega^{-}_{2,n},\left[V_{2}\right]_{n})  
  -(\omega^{-}_{2,N},V^{-}_{2,N}).
 \end{split}
\end{equation*}
Consequently, we have
\begin{equation}\label{theta-1Theorem 8.1}
B(\theta,V)
 =\sum_{n=1}^{N}\int_{I_{n}}\Big\lbrace a(\eta_{2},V_{1}) 
 -a(\eta_{1},V_{2})\Big\rbrace \dd t-\sum_{n=1}^{N}\int_{I_{n}}(\dot{\omega}_{2},V_{2})\dd t 
 -(\omega^{-}_{2,0},V^{+}_{2,0}),
\end{equation}
that is,  $ \theta $ satisfies  \eqref{B_{h,n}= L^V} with 
$f_{1}=\eta_{2}$ 
and $ f_{2}=-A\eta_{1}-\dot{\omega}_{2}$.

Applying the stability estimate 
\eqref{stability estimate-Full}, and recalling \eqref{IC choice} 
such that
\begin{equation*}
\begin{split}
 &\theta_{1,0}=\theta_{1}(0) 
  = U_1(0)-\Pi_k \R_h u_1(0) = \R_h u_0-\R_h u_0 = 0, \\
 &\theta_{2,0}=\theta_{2}(0)
  = U_2(0)-\Pi_k \R_h u_2(0) = \PP_h v_0-\R_h v_0 =(\PP_h-\R_h)v_0,
\end{split}
\end{equation*} 
we have
\begin{equation*}
\begin{split}
 \| &\theta_{1,N}^-\|_{h,l+1}+\| \theta_{2,N}^-\|_{h,l} \\
 &\leq C \Big  ( \| \theta_{1,0}\|_{h,l+1}
   +\| \theta_{2,0}\|_{h,l} 
 +\int_{0}^{T}\lbrace\|\R_h \eta_{2}\| _{h,l+1}
   +\|\PP_h \dot{\omega}_{2}\| _{h,l}
   +\| \PP_h A\eta_{1}\|_{h,l}\rbrace \dd t \Big)\\
 &=C \Big(  \| (\PP_h-\R_h)v_0 \|_{h,l} 
 +\int_{0}^{T}\left\lbrace \| \R_h \eta_{2}\|_{h,l+1}
   +\|\PP_h\dot{\omega}_{2}\| _{h,l}
   +\| \PP_h A\eta_{1}\|_{h,l}\right\rbrace  \dd t \Big).
\end{split}
\end{equation*}
Now, setting $ l=0 $ and having $ \|\cdot\|_{h,0} = \|\cdot\|$ 
and $ \|\cdot\|_{h,1} = \|\cdot\|_{1}$, we obtain
\begin{equation*}
\begin{split}
 \| \theta_{1,N}^-\|_{1}+\| \theta_{2,N}^-\|
 &\leq C \Big(  \| (\PP_h-\R_h)v_0 \| 
  +\int_{0}^{T}\left\lbrace \| \R_h \eta_{2}\|_{1}
   +\|\PP_h \dot{\omega}_{2}\| 
   +\| \PP_h A\eta_{1}\|\right\rbrace  \dd t \Big).
\end{split}
\end{equation*}
Using the fact $\|\PP_h v \| \leq\|v\| $ and 
$\|\R_h v\|_{1} \leq C \|v\|_{1} $ for all $ v\in\V $,  
and $ A_h\R_h=\PP_h A $, we have
\begin{equation*}
\begin{split}
 \| (\PP_h-\R_h)v_0 \|
 &=\| (\PP_h-\PP_h\R_h)v_0 \| 
  \leq \| (\R_h-I)v_0 \|, \\
 \| \R_h\eta_{2}\|_{1}
 & \leq C\| (\Pi_{k}-I)u_{2}\|_{1},\\
 \| \PP_h A\eta_{1}\|
 &=\| A_{h}\R_h \eta_{1}\|
  =\| (\Pi_{k}-I)A_{h}\R_h u_{1}\|
  =\| (\Pi_{k}-I)\PP_h Au_{1}\| \\
 & \leq C \| (\Pi_{k}-I) u_{1}\|_{2}.
\end{split}
\end{equation*}
In view of  $ e=\theta+\eta+\omega$ and $ \eta_{i,N}^-=0 $, we get
\begin{equation*}
\begin{split}
 \| e_{1,N}^-\|_{1} + \| e_{2,N}^-\|
 &\leq C  \Big(\| (\R_h-I)v_0 \| \\
 &\quad +\int_0^{T}
  \big\lbrace \|(\Pi_{k}-I)u_{2}\|_{1} 
   +\|(\R_h-I)\dot{u}_{2}\|
   +\| (\Pi_{k}-I) u_{1}\|_{2}\big\rbrace \dd t \\
 &\quad  +\|\omega_{1,N}^-\|_{1} +\|\omega_{2,N}^-\| \Big), 
\end{split}
\end{equation*}
that, using \eqref{interpolation error} and \eqref{ErrorR_h}, 
we imply a priori error estimate 
\eqref{Error-Full1_1(Rh,Rh)}. 

3. Finally, to prove the error estimate \eqref{Error-Full1_3(Rh,Ph)} we alter the choice as  
\begin{equation*}
\begin{split}
 &\theta_{1}
 =U_{1}-\Pi_{k} \R_h u_{1}, 
   \quad \ \ \eta_{1}=(\Pi_{k}-I)\R_h u_{1},
    \qquad \omega_{1}  =(\R_h-I)u_{1},\\
 &\theta_{2}
 =U_{2}-\Pi_{k} \PP_h u_{2}, 
  \qquad \eta_{2}=(\Pi_{k}-I)\PP_h u_{2},
   \qquad \omega_{2}=(\PP_h-I)u_{2}.
\end{split}
\end{equation*}
Now, using $ \theta=e-\eta-\omega $ and the Galerkin 
orthogonality \eqref{GalerkinOrthogonality-Full}, we have
 \begin{align*}
  B(\theta,V)=-B(\eta,V)-B(\omega,V),
  \quad \forall V=(V_1,V_2)\in \V_{h,\qq}\times \V_{h,\qq}.
 \end{align*}
Then, similar to the previous case, using the alternative expression \eqref{B^*}, we have
\begin{equation*}
\begin{split}
B(\theta,V)
 &=-B(\eta,V)-B(\omega,V)=-B^{*}(\eta,V)-B^{*}(\omega,V)\\
 &=\sum_{n=1}^{N}\int_{I_{n}}\Big\lbrace a(\eta_{1},\dot{V}_{1})
  +a(\eta_{2},V_{1})+( \eta_{2},\dot{V}_{2})
   -a(\eta_{1},V_{2})\Big\rbrace \dd t\\ 
 &\hskip .8cm +\sum_{n=1}^{N-1}\Big\{a(\eta^{-}_{1,n},\left[V_{1}\right]_{n} )
  +( \eta^{-}_{2,n},\left[V_{2}\right]_{n})\Big\}\\
 &\hskip .8cm-a(\eta^{-}_{1,N},V^{-}_{1,N})
   -( \eta^{-}_{2,N},V^{-}_{2,N})\\
 &\hskip .8cm +\sum_{n=1}^{N}\int_{I_{n}}\Big\lbrace a(\omega_{1},\dot{V}_{1})
  +a(\omega_{2},V_{1})+(\omega_{2},\dot{V}_{2})
   -a(\omega_{1},V_{2})\Big\rbrace \dd t\\ 
 &\hskip .8cm +\sum_{n=1}^{N-1}\Big\{a(\omega^{-}_{1,n},\left[V_{1}\right]_{n} )
  +( \omega^{-}_{2,n},\left[V_{2}\right]_{n})\Big\}\\
 &\hskip .8cm-a(\omega^{-}_{1,N},V^{-}_{1,N})
  -(\omega^{-}_{2,N},V^{-}_{2,N}).\\
\end{split}
\end{equation*}
Now, by the definition of $\Pi_{k} $ and 
$ \omega $, we conclude that $  \theta=(\theta_1,\theta_2)\in \V_{h,\qq}\times \V_{h,\qq}$ satisfies the equation
 \begin{equation}\label{theta-2Theorem 8.1}
  \begin{split}
  B(\theta,V)
 &=\sum_{n=1}^{N}\int_{I_{n}}\Big\lbrace a(\eta_{2},V_{1})
 -a(\eta_{1},V_{2})\Big\rbrace \dd t
 +\sum_{n=1}^{N}\int_{I_{n}} a(\omega_{2},V_{1})\dd t,
  \end{split}
\end{equation}
which is of the form \eqref{B_{h,n}= L^V} with $f_{1}=\eta_{2}+\omega_{2}$ 
and $ f_{2}=-A\eta_{1}$.

Then applying the stability estimate 
\eqref{stability estimate-Full}, and recalling 
\eqref{IC choice} such that
\begin{equation*}
\begin{split}
 &\theta_{1,0}=\theta_{1}(0) 
  = U_1(0)-\Pi_k \R_h u_1(0) = \R_h u_0-\R_h u_0 = 0, \\
 &\theta_{2,0}=\theta_{2}(0)
  = U_2(0)-\Pi_k \PP_h u_2(0) = \PP_h v_0-\PP_h v_0 = 0,
\end{split}
\end{equation*} 
we have
\begin{equation} \label{theta_1+theta_2-full1}
\begin{split}
 \| \theta_{1,N}^-\|_{h,l+1}+\| \theta_{2,N}^-\|_{h,l}
 &\leq C \Big ( \| \theta_{1,0}\|_{h,l+1}
   +\| \theta_{2,0}\|_{h,l}\\
 &\quad +\int_{0}^{T}\big\lbrace\|\R_h\eta_{2}\| _{h,l+1}+\|\R_h\omega_{2}\| _{h,l+1}
   +\| \PP_h A\eta_{1}\|_{h,l}\big\rbrace \dd t \Big)\\
 &=C  \int_{0}^{T}\big\lbrace \| \R_h\eta_{2}\|_{h,l+1}
   +\|\R_h\omega_{2}\| _{h,l+1}+\| \PP_h A\eta_{1}\|_{h,l}\big\rbrace  \dd t.
\end{split}
\end{equation}
Now, we set $ l=-1 $ and we obtain
\begin{equation*}
 \| \theta_{1,N}^-\|
 \leq C  \int_{0}^{T}\big\lbrace \| \R_h\eta_{2}\|
   +\|\R_h\omega_{2}\| +\| \PP_h A\eta_{1}\|_{h,-1}\big\rbrace  \dd t.
\end{equation*}
Then, since
\begin{equation*}
\begin{split}
&\| \R_h\eta_{2}\|=\| \R_h(\Pi_{k}-I)\PP_h u_{2}\|=\|(\Pi_{k}-I)\PP_h u_{2}\|\leq\| (\Pi_{k}-I)u_{2}\|,\\
 &\|\R_h\omega_{2}\|=\|\R_h(\PP_h-I)u_{2}\|=\|\PP_h(I-\R_h)u_{2}\|\leq\| (\R_h-I)u_{2}\|,\\
   &\| \PP_h A\eta_{1}\|_{h,-1}=\| A_h \R_h(\Pi_{k}-I)u_{1}\|_{h,-1}=\| (\Pi_{k}-I)\R_h u_{1}\|_{h,1}\\
 &\hskip 2cm\leq C\| (\Pi_{k}-I)u_{1}\|_{1},
\end{split}
\end{equation*}
in view of $ e=\theta+\eta+\omega $, $ \eta_{i,N}^-=0 $, we conclude that
\begin{align*}
  \| e_{1,N}^-\| \leq C   \Big\{\int_0^{T}
\big\lbrace \| (\Pi_{k}-I)u_{2}\|+\| (\R_h-I)u_{2}\|+\| (\Pi_{k}-I)u_{1}\|_{1}\big\rbrace \dd t+\|\omega_{1,N}^-\| \Big\}.
\end{align*}
Which implies that last estimate by \eqref{interpolation error} and \eqref{ErrorR_h}. 
The proof is now complete. 
\end{proof}

\subsection{Interior estimates}
\begin{thm}   \label{Theorem-Full-3}
Let 
$(U_{1},U_{2}) $ and $(u_{1},u_{2})$ be the solutions of 
$\eqref{B_{h,n}= L^V}$ and $ \eqref{B(u,v)} $,  respectively.
Then with $  e=(e_{1},e_{2})= (U_{1},U_{2})-(u_{1},u_{2})$ and 
for some constant $ C> 0$ (independent of $T$), 
we have
\begin{equation}\label{Error-Full-IN1_1(Rh,Rh)}
\begin{split}
 \| e_1\|_{1,J_N}
 &+ \| e_2\|_{J_N} \\
 & \leq C \Big(k^{\qq+1}\big\{\| u_1^{(\qq+1)}\|_{1,J_N} 
   +\| u_2^{(\qq+1)}\|_{1,J_N}\big\}\\
 &\qquad + \sum_{n=1}^N k_n^{\qq+2}\big\{\| u_2^{(\qq+1)}\|_{1,I_n}
 +\| u_1^{(\qq+1)}\|_{2,I_n}\big\}\\
 &\qquad+h^{\rr}\big\lbrace \|v_0\|_{\rr} 
  +\int_{0}^{T}\| \dot{u}_{2}\|_{\rr} \dd t
 +\| u_{1}\|_{\rr+1,J_N}+\| u_{2}\|_{\rr,J_N}\big\rbrace\Big),
\end{split}
\end{equation}
\begin{equation}\label{Error-Full-IN1_2(Rh,Ph)}
\begin{split}
 \| e_1\|_{J_N}
 \leq C \Big(&k^{\qq+1}\| u_1^{(\qq+1)}\|_{1,J_N}
+\sum_{n=1}^N k_n^{\qq+2} \big\{\|u_2^{(\qq+1)}\|_{I_n}+\| u_1^{(\qq+1)}\|_{1,I_n}\big\}\\
 &+h^{\rr+1}\big\lbrace\int_{0}^{T}\| u_{2}\|_{\rr+1} \dd t
  + \| u_{1}\|_{\rr+1,J_N}\big\rbrace\Big).
\end{split}
\end{equation}
\end{thm}  
\begin{proof}
1. We split the error as:
\begin{equation*}
 e=U-u=\big(U-\Pi_{k} \Pi_{h}u\big)
 +\big(\Pi_{k} \Pi_{h}u- \Pi_{h}u\big)
 +\big( \Pi_{h}u-u\big) =\theta+\eta+\omega,
\end{equation*}
where $ \Pi_{k} $ is the linear interpolation operator defined by 
\eqref{interpolation operator}, and $ \Pi_{h} $ (to be specified) 
is in terms of the projectors $ \R_h $ or $ \PP_h  $ in 
\eqref{omega,theta}. 

2. To prove the first error estimate 
\eqref{Error-Full-IN1_1(Rh,Rh)}, we choose
\begin{equation*}
  \theta_{i}=U_{i}-\Pi_{k} \R_h u_{i}, \quad \eta_{i}=(\Pi_{k}-I)\R_h u_{i},\quad \omega_{i}=(\R_h-I)u_{i},\quad  i=1,2.
\end{equation*}
Similar to the second part of the proof of Theorem 
\ref{Theorem-Full-1}, we obtain equation 
\eqref{theta-1Theorem 8.1},  
that is, $ \theta $ satisfies \eqref{B_{h,n}= L^V} 
 with $f_{1}=\eta_{2} $ and $ f_{2}=-A\eta_{1}-\dot{\omega}_{2}$.

Then, using the energy identity \eqref{stability identity-Full} 
and recalling 
\begin{equation*}
 \theta_{1,0}=\theta_{1}(0)=0,\quad 
 \theta_{2,0}=\theta_{2}(0)=(\PP_h-\R_h)v_0,
\end{equation*} 
we have, for $1\leq M \leq N$,
\begin{equation*} 
\begin{split}
 \| \theta_{1,M}^{-}&\|_{h,l+1}^2
 +\| \theta_{1,0}^{+} \|_{h,l+1}^2
   +\|  \theta_{2,M}^{-} \|_{h,l}^2
   +\| \theta_{2,0}^{+} \|_{h,l}^2\\
 &\quad+\sum_{n=1}^{M-1}
   \Big\{ \|[\theta_1]_n \|_{h,l+1}^2
   +\|[\theta_2]_n \|_{h,l}^2\Big\}\\
 &= \| (\PP_h-\R_h)v_0 \|_{h,l} \\ 
 &\quad +2 \int_0^{t_{M}} 
   \big\{a(\R_h\eta_2,A_{h}^l\theta_1)
    -(\PP_h A\eta_1,A_{h}^l\theta_2) 
    -(\PP_h \dot{\omega}_{2},A_{h}^l\theta_2) 
   \big\}\dd t \\
 &\leq \| (\PP_h-\R_h)v_0 \|_{h,l} \\ 
 &\quad +C\Big\{\int_0^{t_M}\|\R_h\eta_2\|_{h,l+1}   
   \| \theta_1\|_{h,l+1} \dd t
  +\int_0^{t_M}\|\PP_h  A\eta_1\|_{h,l} 
   \|\theta_2\|_{h,l}  \dd t \\
 &\qquad+\int_0^{t_M}\|\PP_h  \dot{\omega}_{2}\|_{h,l} 
   \|\theta_2\|_{h,l} \dd t \Big\}\\
 &\leq \| (\PP_h-\R_h)v_0 \|_{h,l} \\ 
 &\quad +C\Big\{\int_0^{t_M}\| \R_h\eta_2\|_{h,l+1} \dd t  
   \| \theta_1\|_{h,l+1,J_M}
  +\int_{0}^{t_M}\|\PP_h A\eta_1\|_{h,l} \dd t 
   \|\theta_2\|_{h,l,J_M}\\
 &\qquad +\int_{0}^{t_M}\|\PP_h  \dot{\omega}_{2}\|_{h,l} \dd t 
   \|\theta_2\|_{h,l,J_M}\Big\}, 
\end{split}
\end{equation*}
where, Cauchy-Schwarz inequality was used. 
That implies
\begin{equation}\label{(V_i=theta_i_2)}
\begin{split}
  \| &\theta_{1,M}^{-}\|_{h,l+1}^2
 +\| \theta_{1,0}^{+} \|_{h,l+1}^2
   +\|  \theta_{2,M}^{-} \|_{h,l}^2
   +\| \theta_{2,0}^{+} \|_{h,l}^2 \\
 &\quad +\sum_{n=1}^{M-1}
   \Big\{ \|[\theta_1]_n \|_{h,l+1}^2
   +\|[\theta_2]_n \|_{h,l}^2\Big\}\\
 &\leq \| (\PP_h-\R_h)v_0 \|_{h,l} \\ 
 &\quad + C\Big\{\int_0^{t_N}\| \R_h\eta_2\|_{h,l+1} \dd t\| \theta_1\|_{h,l+1,J_N}
   +\int_0^{t_N}\|\PP_h A\eta_{1}\|_{h,l}    \dd t\|\theta_2\|_{h,l,J_N}  \\ 
  &\qquad +\int_0^{t_N}\|\PP_h \dot{\omega}_{2}\|_{h,l} \dd t  \|\theta_2\|_{h,l,J_N}\Big\}.
\end{split}
\end{equation}
Since $ \qq= 0,1 $, we have 
\begin{equation*}
\begin{split}
 \| \theta_1\|_{h,l+1,J_N}
 &\leq \max_{1\leq n\leq N} \Big(\| \theta_{1,n}^{-}\|_{h,l+1}
   +\| \theta_{1,n-1}^{+}\|_{h,l+1} \Big)\\
 &\leq \max_{1\leq n\leq N} \| \theta_{1,n}^{-}\|_{h,l+1}
   +\max_{1\leq n\leq N} \| \theta_{1,n-1}^{+}\|_{h,l+1}\\
 &\leq\max _{1\leq n\leq N} \| \theta_{1,n}^{-}\|_{h,l+1}
   +\max_{1\leq n\leq N} \Big( \| [ \theta_{1}]_{n-1} \|_{h,l+1}
   +\| \theta_{1,n-1}^{-}\|_{h,l+1}\Big)\\
 &\leq \max_{1\leq n\leq N}\| \theta_{1,n}^{-}\|_{h,l+1}
   +\max_{1\leq n\leq N-1} \Big( \| [ \theta_1]_n \|_{h,l+1}
   + \| \theta_{1,n}^{-}\|_{h,l+1}\Big)\\
 &\hskip .6cm+\| \theta_{1,0}^{+}\|_{h,l+1}\\ 
 & \leq 2 \max_{1\leq n\leq N} \| \theta_{1,n}^{-}\|_{h,l+1}
   +\max_{1\leq n\leq N-1} \| [ \theta_1]_n \|_{h,l+1} 
   +\| \theta_{1,0}^{+}\|_{h,l+1}.
\end{split}
\end{equation*}
Note that $ \| \theta_{1,0}^{-}\|_{h,l+1}=\|  U_{1,0}^{-}-\Pi_{k}\R_h u_{0}\|_{h,l+1} =0 $ and hence 
\begin{align}\label{theta_1-J_N_2}
  \| \theta_1 \|_{h,l+1,J_N}^2
  \leq C \max _{1\leq n\leq N}\Big(\| \theta_{1,n}^{-}\|_{h,l+1}^2
  +\sum_{n=1}^{N-1} \| [\theta_1]_n\|_{h,l+1}^2
  +\| \theta_{1,0}^{+}\|_{h,l+1}^2\Big),
\end{align}
and in a similar way for $ \|\theta_2\|_{h,l,J_N} $, we have
\begin{align} \label{theta_2-J_N_2}
  \| \theta_2 \|_{h,l,J_N}^2
  \leq C \max _{1\leq n\leq N}\Big(\| \theta_{2,n}^{-}\|_{h,l}^2
  +\sum_{n=1}^{N-1} \| [\theta_{2}]_n\|_{h,l}^2
  +\| \theta_{2,0}^{+}\|_{h,l}^2\Big). 
\end{align}
Now, using \eqref{theta_1-J_N_2} and \eqref{theta_2-J_N_2} in 
\eqref{(V_i=theta_i_2)} and the fact that 
$ab\leq \frac{1}{4\epsilon}a^2+\epsilon b^2 $ for some $\epsilon>0$, 
we have 
\begin{equation*}
\begin{split}
  \| \theta_1 \|_{h,l+1,J_N}^2+ \| \theta_2 \|_{h,l,J_N}^2
  &\leq \| (\PP_h-\R_h)v_0 \|_{h,l} \\ 
 &\quad + C\Big\{\int_0^{t_N} \| \R_h\eta_2\|_{h,l+1} \dd t 
    \|\theta_1 \|_{h,l+1,J_N}\\
  &\qquad +\int_0^{t_N} \| \PP_h A\eta_1\|_{h,l} \dd t \|\theta_2 \|_{h,l,J_N}\\
  &\qquad +\int_0^{t_N} \| \PP_h \dot{\omega}_{2}\|_{h,l} \dd t \|\theta_2 \|_{h,l,J_N}\Big\}\\
  &\leq \| (\PP_h-\R_h)v_0 \|_{h,l} \\ 
 &\quad +C\bigg\{ 
   \frac{1}{4\epsilon}\Big(\int_0^{t_N}\|\R_h\eta_2\|_{h,l+1}\dd t \Big)^2
    +\epsilon\| \theta_1 \|_{h,l+1,J_N}^2\\
  &\qquad+\frac{1}{4\epsilon}\Big(\int_0^{t_N}\| \PP_h A\eta_1\|_{h,l} \dd t \Big)^2
    +\epsilon\| \theta_2 \|_{h,l,J_N}^2\\
  &\qquad+\frac{1}{4\epsilon}\Big(\int_0^{t_N}\| \PP_h \dot{\omega}_{2}\|_{h,l} \dd t \Big)^2
    +\epsilon\| \theta_2 \|_{h,l,J_N}^2 \bigg\},
\end{split}
\end{equation*}
and as a result, we obtain
\begin{equation*}
\begin{split}
  \| \theta_1 \|_{h,l+1,J_N}^2+ \| \theta_2 \|_{h,l,J_N}^2
 &\leq \| (\PP_h-\R_h)v_0 \|_{h,l} \\ 
 &\quad + C \Big\{\int_0^{t_N}\|\R_h \eta_2\|_{h,l+1} \dd t   
    +\int_0^{t_N}\| \PP_h A\eta_1\|_{h,l} \dd t\\
 & \qquad+\int_0^{t_N}\| \PP_h \dot{\omega}_{2}\|_{h,l} \dd t \Big\}^2,
  \end{split}
\end{equation*}
that implies
\begin{equation*}
\begin{split}
  \| \theta_1 \|_{h,l+1,J_N}+ \| \theta_2 \|_{h,l,J_N}
 & \leq  \| (\PP_h-\R_h)v_0 \|_{h,l} \\ 
 &\quad +C\Big\{\int_{0}^{t_N}\| \R_h\eta_2\|_{h,l+1} \dd t 
    +\int_0^{t_N}\|\PP_h  A\eta_1\|_{h,l} \dd t \\
  & \hskip .8cm+\int_0^{t_N}\|\PP_h \dot{\omega}_{2}\|_{h,l} \dd t  \Big\}.   
\end{split}
\end{equation*}
Now, setting $l=0$ and having  
$ \|\cdot\|_{h,0} = \|\cdot\|$
 and $ \|\cdot\|_{h,1} = \|\cdot\|_{1}$, we obtain
\begin{equation*}
  \| \theta_1 \|_{1,J_N}+ \| \theta_2 \|_{J_N}
 \leq  \| (\PP_h-\R_h)v_0 \| 
  +C\Big(\int_{0}^{t_N}\big\lbrace\| \R_h\eta_2\|_{1}
   +\|\PP_h A\eta_1\| +\|\PP_h \dot{\omega}_{2}\| \big\rbrace\dd t  \Big).   
\end{equation*}
Using the fact that 
$\|\PP_h v \|_{1} \leq C \|v\|_{1} $,  
$\|\PP_h v \| \leq\|v\| $ and 
$\|\R_h v\|_{1} \leq C \|v\|_{1} $,  for all $ v\in \V $, and $ A_h\R_h=\PP_h A $, we get
\begin{equation*}
\begin{split}
 \| (\PP_h-\R_h)v_0 \|
 &=\| (\PP_h-\PP_h\R_h)v_0 \| 
  \leq \| (\R_h-I)v_0 \|, \\
 \| \R_h\eta_{2}\|_{1}
 & \leq C\| (\Pi_{k}-I)u_{2}\|_{1},\\
 \| \PP_h A\eta_{1}\|
 &=\|A_h\R_h  \eta_{1}\|
 =\| (\Pi_{k}-I)A_h\R_h u_{1}\|
 =\| (\Pi_{k}-I)\PP_h Au_{1}\|\\
 &
 \leq C \| (\Pi_{k}-I) u_{1}\|_{2}.
\end{split}
\end{equation*}
In view of $ e=\theta+\eta+\omega $, we have
\begin{equation*}
\begin{split}
  \| e_1\|_{1,J_N}
 &+\| e_2\|_{J_N} 
 \leq  \| (\R_h-I)v_0 \| \\
 &\quad+ C\Big(\int_{0}^{t_N}\big\lbrace\| (\Pi_{k}-I)u_{2}\|_{1}
    +\|(\Pi_{k}-I)u_{1}\|_{2}+\|(\R_h-I)\dot{u}_{2}\|\big\rbrace\dd t\\
    &\hskip 2.5cm+\| \eta_1\|_{1,J_N}
   +\| \eta_2\|_{J_N}+\| \omega_1\|_{1,J_N}
   +\|\omega_2\|_{J_N}\Big).
\end{split}
\end{equation*}
Now, using \eqref{interpolation error} and \eqref{ErrorR_h} 
we conclude a priori error estimate \eqref{Error-Full-IN1_1(Rh,Rh)}.

3. To prove the second error estimate \eqref{Error-Full-IN1_2(Rh,Ph)}, we choose
\begin{equation*}
\begin{split}
 &\theta_{1}=U_{1}-\Pi_{k} \R_h u_{1}, \qquad \eta_{1}=(\Pi_{k}-I)\R_h u_{1},\qquad \omega_{1}=(\R_h-I)u_{1},\\
 &\theta_{2}=U_{2}-\Pi_{k} \PP_h u_{2}, \qquad \eta_{2}=(\Pi_{k}-I)\PP_h u_{2},\qquad \omega_{2}=(\PP_h -I)u_{2}.
\end{split}
\end{equation*}
Then, similar to the third part of the proof of 
Theorem \ref{Theorem-Full-1}, we obtain the equation \eqref{theta-2Theorem 8.1},  
that is,  $ \theta $ satisfies  \eqref{B_{h,n}= L^V} with $f_{1}=\eta_{2}+\omega_{2}$ 
and $ f_{2}=-A\eta_{1}$.

Then using the energy identity \eqref{stability identity-Full} 
and recalling $ \theta_{i,0}=\theta_{i}(0)=0$, we get 
\begin{equation*}
\begin{split}
  \| \theta_1 \|_{h,l+1,J_N}+ \| \theta_2 \|_{h,l,J_N}
 & \leq  C\Big\{\int_{0}^{t_N}\| \R_h\eta_2\|_{h,l+1} \dd t 
    +\int_0^{t_N}\| \R_h \omega_{2}\|_{h,l+1} \dd t \\
 & \hskip .8cm+\int_0^{t_N}\|\PP_h A\eta_{1}\|_{h,l} \dd t  \Big\}.   
\end{split}
\end{equation*}
Now, we set $l=-1$ and we obtain
\begin{equation*}
  \| \theta_1 \|_{J_N}
  \leq  C\Big(\int_{0}^{t_N}\big\lbrace\| \R_h\eta_2\| 
  +\| \R_h \omega_{2}\|+\|\PP_h A\eta_{1}\|_{h,-1} \big\rbrace\dd t \Big).
\end{equation*}
Then since
\begin{equation*}
\begin{split}
 &\| \R_h\eta_{2}\|=\| \R_h(\Pi_{k}-I)\PP_h u_{2}\|=\|\PP_h (\Pi_{k}-I)u_{2}\|\leq\| (\Pi_{k}-I)u_{2}\|,\\
 &\|\R_h\omega_{2}\|=\|\R_h(\PP_h -I)u_{2}\|=\|\PP_h(I-\R_h)u_{2}\|\leq\| (\R_h-I)u_{2}\|,\\
 & \| \PP_h A\eta_{1}\|_{h,-1}=\| A_h \R_h(\Pi_{k}-I)u_{1}\|_{h,-1}=\| (\Pi_{k}-I)\R_h u_{1}\|_{h,1}\\
 &\hskip 2cm\leq C\| (\Pi_{k}-I)u_{1}\|_{1}.
\end{split}
\end{equation*}
In view of $ e=\theta+\eta+\omega $, we have
\begin{equation*}
\begin{split}
  \| e_1\|_{J_N}
  &\leq C \Big(\int_{0}^{t_N}\big\lbrace\|(\Pi_{k}-I)u_{2}\|
+\| (\R_h-I)u_{2}\|+\| (\Pi_{k}-I)u_{1}\|_{1}\big\rbrace \dd t\\
&\hskip 1cm +\| \eta_1\|_{J_N}+\| \omega_1\|_{J_{N}}\Big).
\end{split}
\end{equation*}
Now, using \eqref{interpolation error} and \eqref{ErrorR_h} 
a priori error estimate \eqref{Error-Full-IN1_2(Rh,Ph)} is obtained.
\end{proof}

\begin{rem}   \label{A tilde: section 6}
Theorem \ref{Theorem-Full-1} and Theorem \ref{Theorem-Full-3}, 
recalling Remark \ref{A tilde: section 5}, hold true for 
the dG($\qq$)-cG($\rr$) approximation of \eqref{Wave-eq-tildeA}, 
using the corresponding norms  
$\vertiii{\cdot}_s$ and $\vertiii{\cdot}_{s,J_N}$, 
instead of $\|\cdot\|_s$ and $\|\cdot\|_{s,J_N}$, respectively. 
\end{rem}


\section{Numerical example} \label{Example}
In this section, we illustrate the temporal rate of convergence for 
dG(0)-cG(1) and dG(1)-cG(1), based on the uniform in time error 
estimates, by a simple example. 
We also present the pointwise (in time) error estimates and the discrete energy. 

\subsection{System of linear equations for dG(0) and dG(1) time-stepping}
For the piecewise constant case, dG(0),  
we have the system of linear equations, for $n=1,\dots,N$,  
\begin{align*}
\begin{bmatrix}
 A &-k_n A \\
 k_n A &M \\
\end{bmatrix}
\begin{bmatrix}
 U _{1,n}\\
 U_{2,n} \\
\end{bmatrix}
=
\begin{bmatrix}
 A & 0 \\
 0 & M \\
\end{bmatrix}
\begin{bmatrix}
 U _{1,n-1}\\
 U_{2,n-1} \\
\end{bmatrix}
+k_n\begin{bmatrix}
 0 \\ 
  F_n\\
\end{bmatrix},
\end{align*}
where $A$ and $M$ are the stiffness and mass matrices, respectively, 
and $F_n$ is the load vector. 

For the piecewise linear case, dG(1), we define 
$\Psi_{n} ^{1}(t)=\frac{t_{n}-t}{k_{n}}$, 
$\Psi_{n}^{2}(t)=\frac{t-t_{n-1}}{k_{n}}$ 
and use the representation, for $i=1,2$,  
\begin{equation*}
 U_{i}(x,t)
 =U_{i,n-1}^{+}(x)\Psi^{1}_{n}(t)+U_{i,n}^{-}(x)\Psi^{2}_{n}(t), 
 \quad x\in\Omega,\ t\in I_{n}.
\end{equation*} 
Then, the system of linear equations, for $n=1,\dots,N$, is
{\begin{align*}
\begin{bmatrix}
  \frac{1}{2}A & \frac{1}{2}A & -\omega_{n}^{12}A&-\omega_{n}^{11}A\\
   \frac{1}{2}A &-\frac{1}{2}A & -\omega_{n}^{22}A&-\omega_{n}^{21}A\\
   \omega_{n}^{12}A&\omega_{n}^{11}A &\frac{1}{2} M &\frac{1}{2} M \\
   \omega_{n}^{22}A&\omega_{n}^{21}A &\frac{1}{2} M&-\frac{1}{2} M
\end{bmatrix}
&\begin{bmatrix}
  U _{1,n}^{-}\\
  U_{1,n-1}^{+} \\
  U _{2,n}^{-}\\
  U_{2,n-1}^{+}
\end{bmatrix}\\
& =
\begin{bmatrix}
  A & 0 & 0 & 0 \\
  0 & 0 & 0 & 0\\
  0 & 0 & M & 0 \\
  0 & 0 & 0 & 0
\end{bmatrix}
\begin{bmatrix}
  U _{1,n-1}^{-}\\
  U_{1,n-2}^{+} \\
  U _{2,n-1}^{-}\\
  U_{2,n-2}^{+}
\end{bmatrix}
+\begin{bmatrix}
  0\\
  0\\
  F_{n1}\\
  F_{n2}
\end{bmatrix},
\end{align*}} 
where 
$\omega _{n}^{pr}
=\int_{I_{n}}\Psi_{n}^{r}(t)\Psi_{n}^{p}(t) \dd t$, 
and $F_{np},\ p=1,2$ are the load vectors with components 
$ f_{np}=\int_{I_{n}}( f(t),\Psi_{n}^{p}(t))\dd t $. 

\subsection{Example} 
We consider  \eqref{Wave-eq}  in one dimension  with 
homogeneous Dirichlet boundary condition, the 
source term $ f=0$, and the initial conditions 
$u(x,0) =\sin x$ and $ \dot{u}(x,0) =0$, 
for which the exact solution is 
$u(x,t)=\sin x \cos t$, $x \in [0,\pi],\ t\in[0,T]$. 

Figure 1 shows the optimal rate of convergence for dG(0)-cG(1) 
and dG(1)-cG(1) with  uniform in time $L_2$-norm for the displacement and the velocity, 
that is in agreement with \eqref{Error-Full-IN1_1(Rh,Rh)} 
and \eqref{Error-Full-IN1_2(Rh,Ph)}. 
We have used short time $T=1$ and long time $T=10$. 
The figures for the error estimates \eqref{Error-Full1_1(Rh,Rh)} 
and \eqref{Error-Full1_3(Rh,Ph)} are very similar, as expected, 
and therefore they are not presented here. 

The behaviour of the errors $\|e_1(t)\|$ and $\|e_2(t)\|$ in time 
are shown in Figure 2 for both dG(0) and dG(1), with $T=10$. 
We note that the slope of error accumulation is very small, 
in particular, for dG(1) in compare with the finest time mesh 
$10*2^{-9}$. This shows that the error accumulation does not 
depend on time $T$ in long-time integration, 
since the the stability constants are independent of $T$. 

The total energy of the system is  
$  \frac12\|\nabla u\|^2 + \frac12\|\dot u \|^2 = \frac{\pi}{4}$. 
The discrete energy (for three different time steps) has been 
compared with the theoretical energy in Figure 3, for both methods 
dG(0) and dG(1).  
In Figure 4, we show the discrete energy for dG(0) with 
the time step $k=10\cdot 2^{-9}$, and for dG(1) with even a  
bigger time step $k=10\cdot 2^{-7}$. 
In all experiments dG(1) outperforms dG(0). 
It is shown that dG(1) is much more accurate even with a 
considerably larger time step. 

\begin{figure}[htp]  \label{Fig: Order}
 \centering
 \includegraphics[width=6.2cm,height=4.1cm]
   {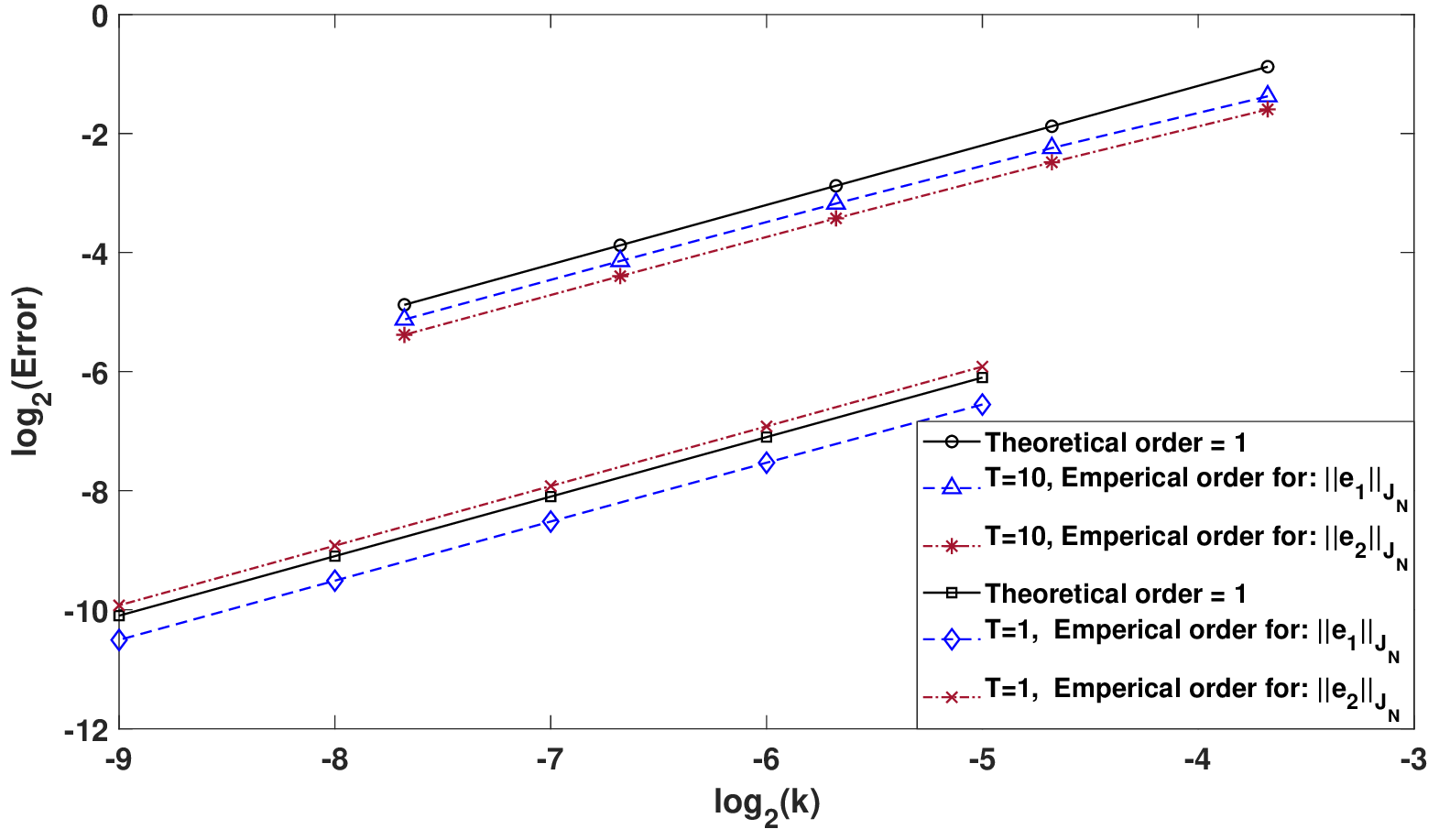}
 \includegraphics[width=6.2cm,height=4.1cm]
   {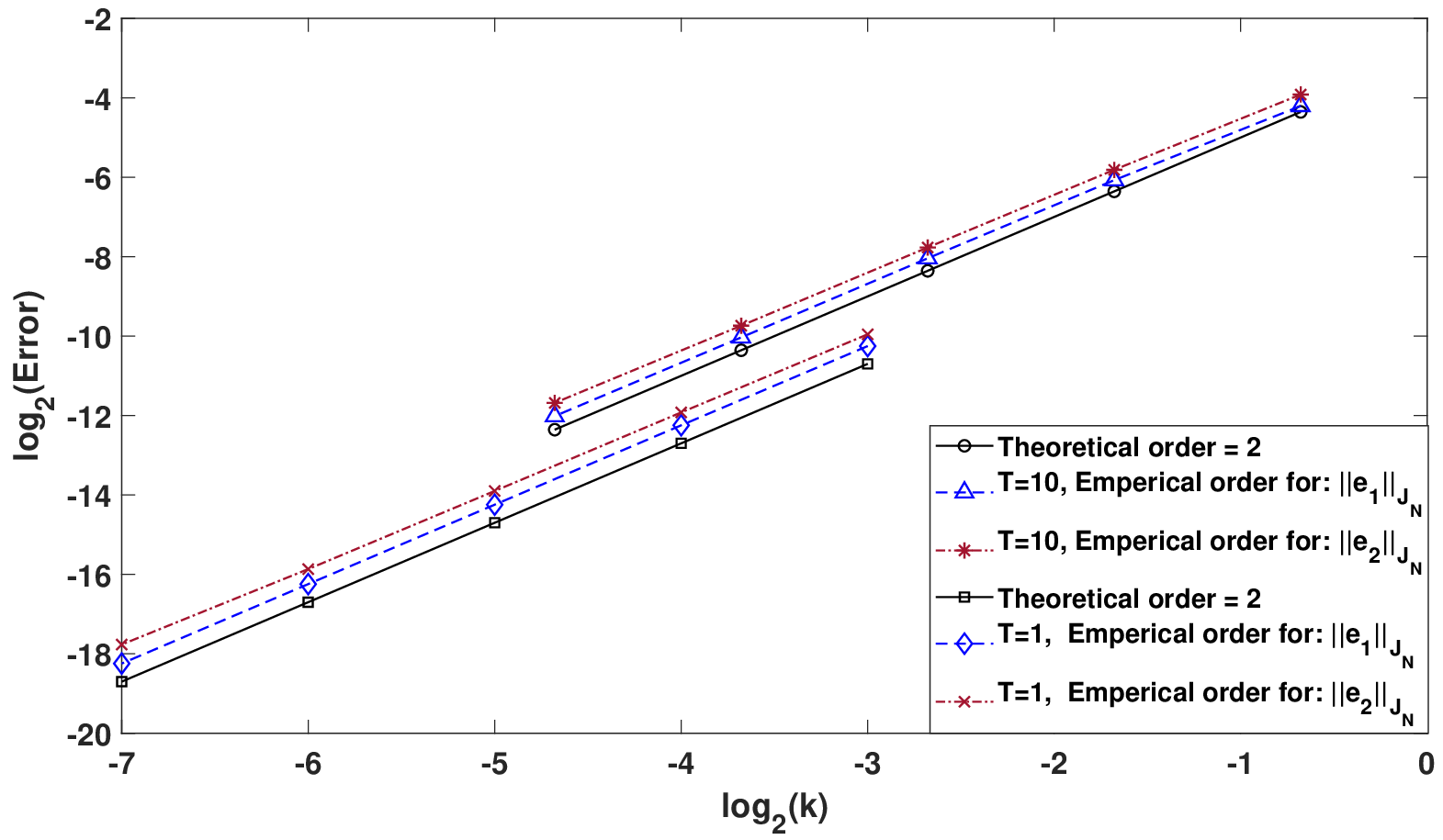}
 \caption{Temporal rate of convergence with uniform in time 
 $L^{2}$-norm for the displacement and the velocity, with $T=1, 10$: 
 (left) dG(0) (right) dG(1).}
\end{figure}
\begin{figure}[htp]  \label{Fig: error behaviour}
 \centering
 \includegraphics[width=12.cm,height=3.1cm]
   {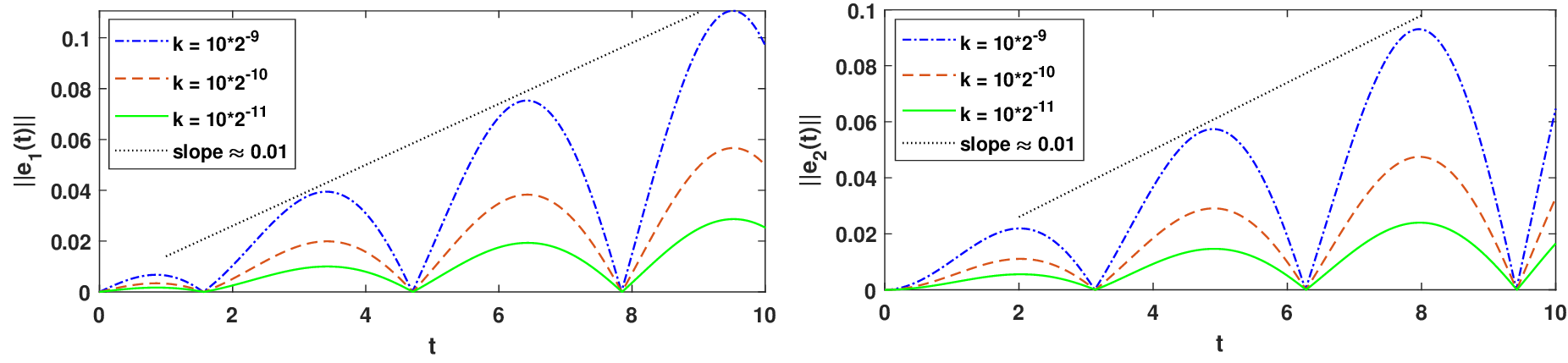}
 \includegraphics[width=12.cm,height=3.1cm]
   {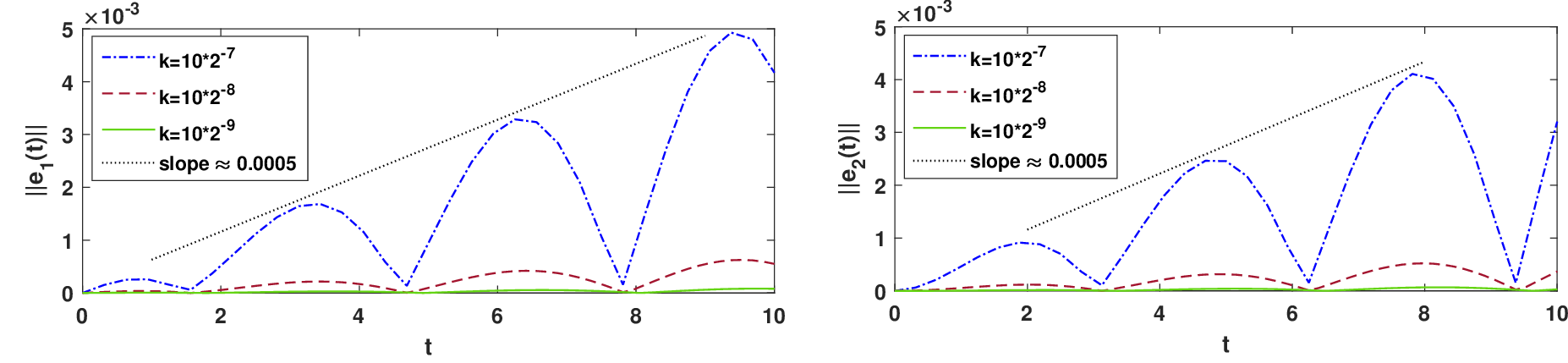}
 \caption{Behaviour of the errors $\|e_1(t)\|$ and $\|e_2(t)\|$ 
 in time: (up) dG(0) (down) dG(1).}
\end{figure}
\begin{figure}[htp]  \label{Fig: Discrete energy}
 \centering
 \includegraphics[width=6.1cm,height=3.2cm]
   {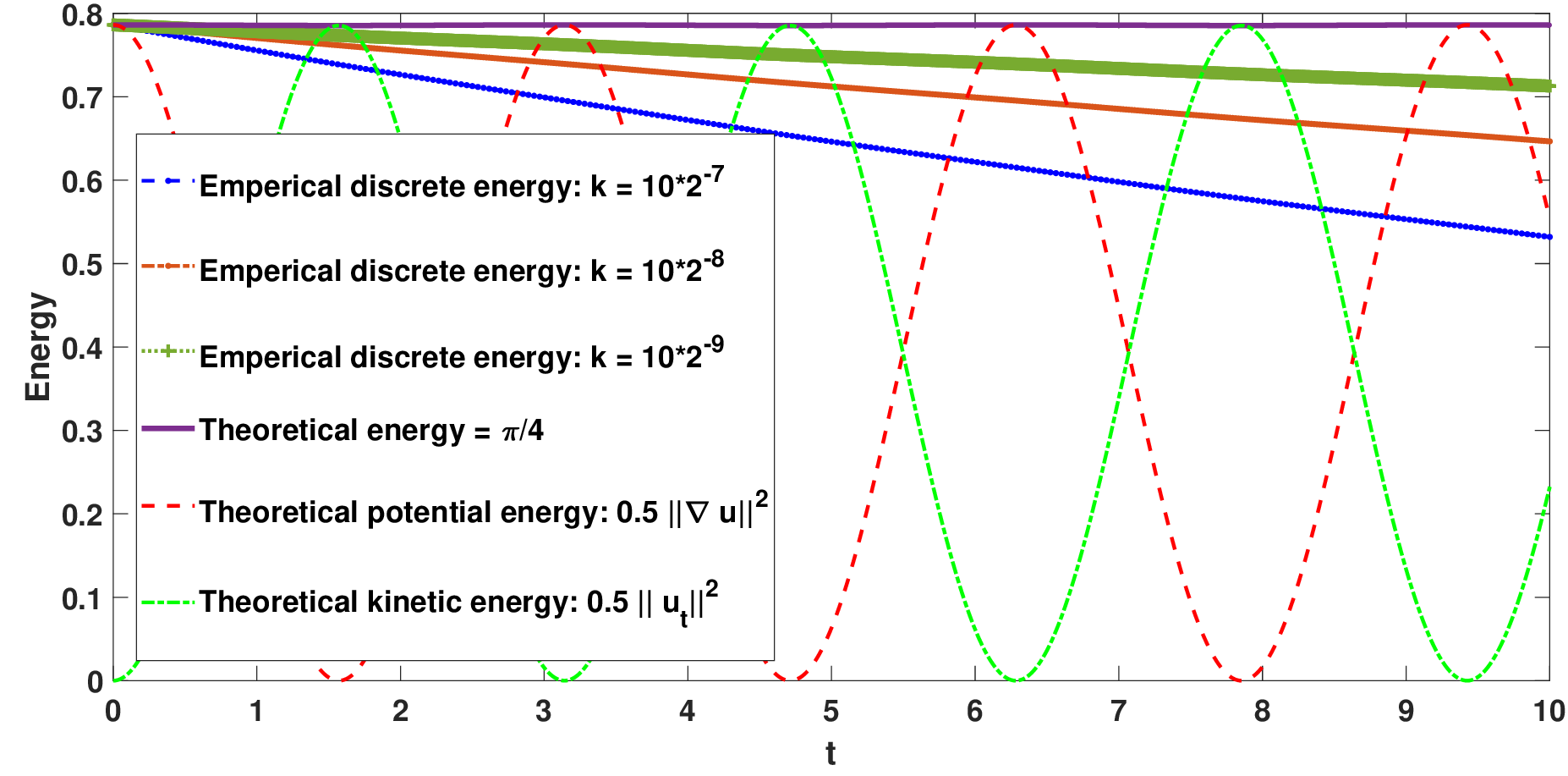}
 \includegraphics[width=6.1cm,height=3.2cm]
   {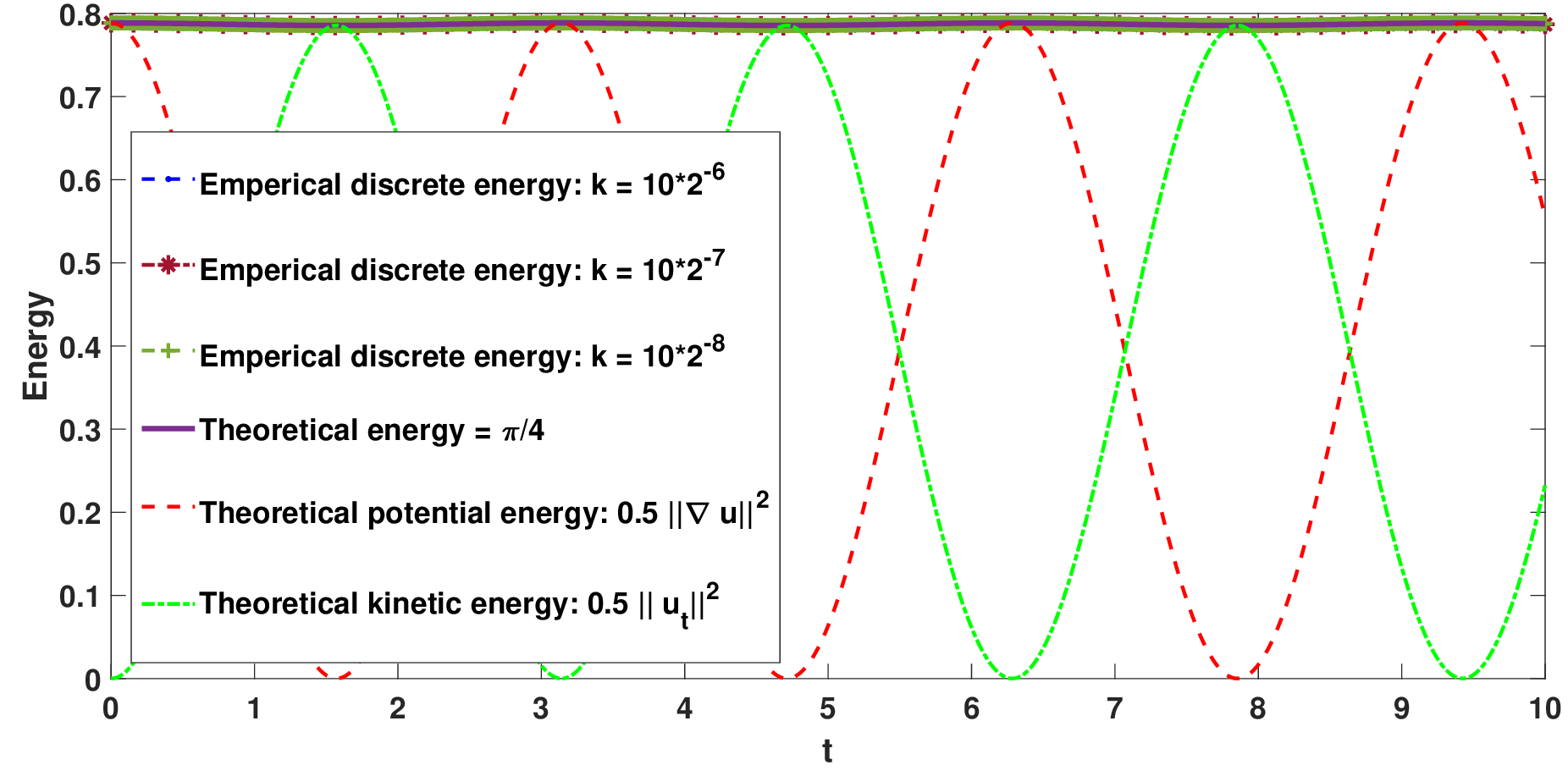}
 \caption{Comparison of the theoretical (continuous) energy 
 and the discrete energy for different values of the time step $k$: 
 (left) dG(0) (right) dG(1).}
\end{figure}
\begin{figure}[htp]  \label{Fig: one Discrete energy}
 \centering
 \includegraphics[width=6.cm,height=3.2cm]
   {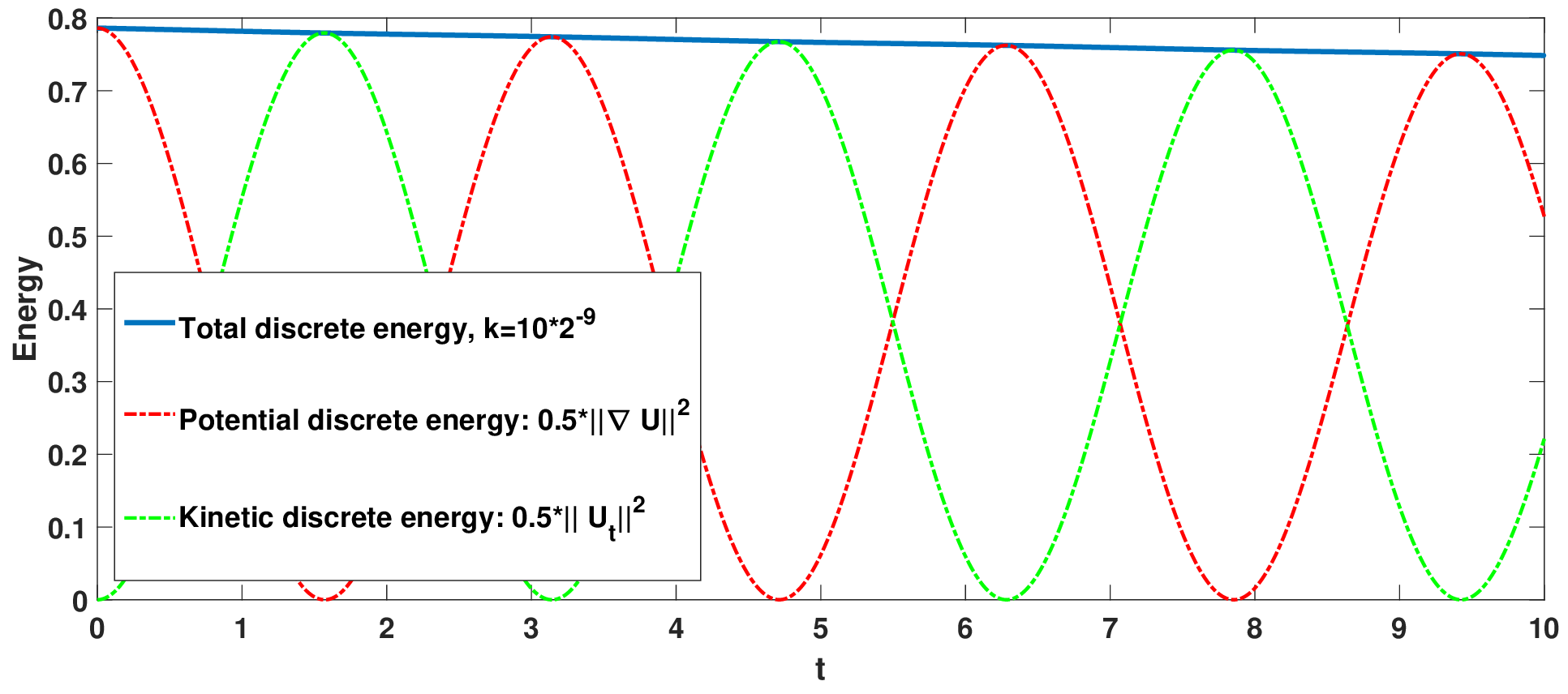}
 \includegraphics[width=6.cm,height=3.2cm]
   {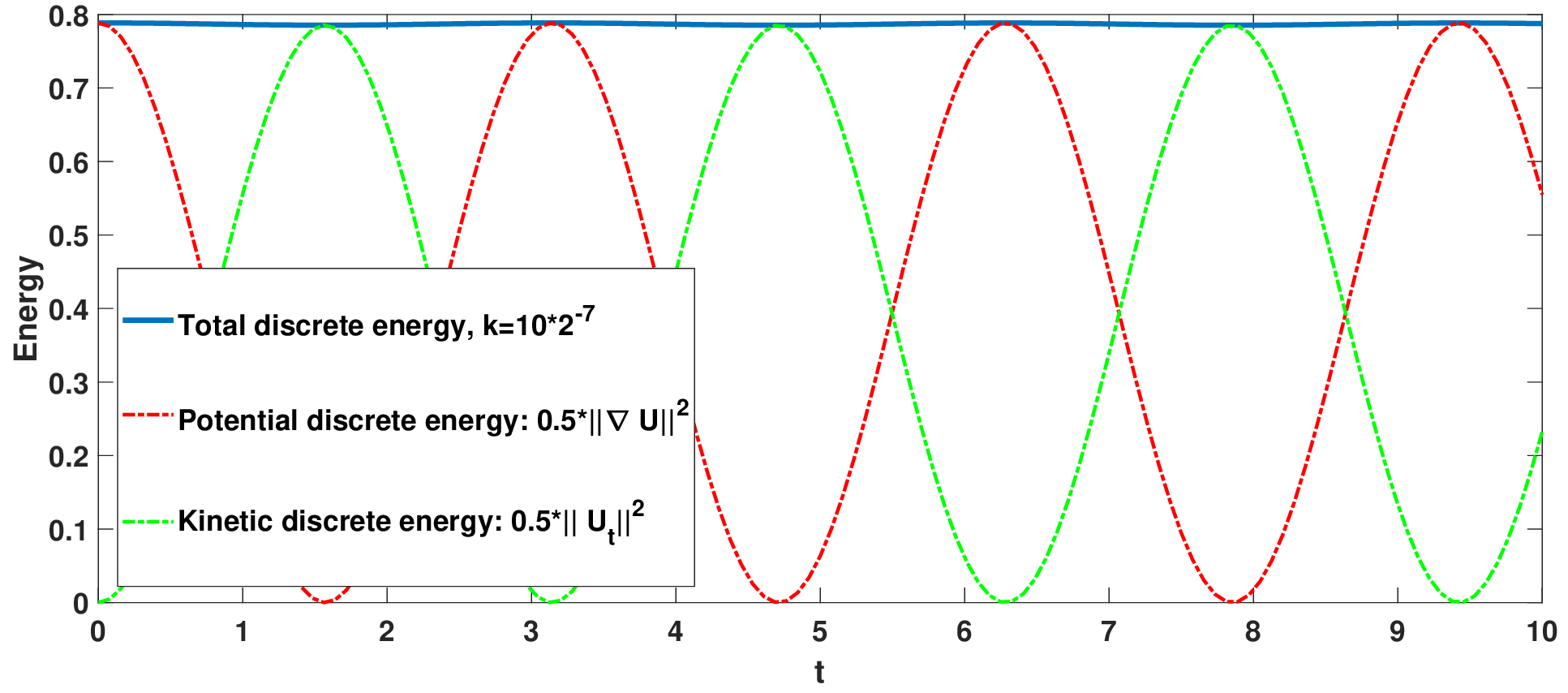}
 \caption{Discrete energy:  
 (left) dG(0) with $k=10\cdot 2^{-9}$ 
 (right) dG(1) with $k=10\cdot 2^{-7}$.}
\end{figure}

\newpage

\textbf{Acknowledgment.} 
We would like to thank the anonymous referee for constructive 
comments that helped us to improve the manuscript. 
We also thank Prof. Omar Lakkis for fruitful discussion 
and his constructive comments. 

\bibliographystyle{amsplain}

\end{document}